\newtheorem{thm}{Theorem}
\newtheorem{theor}{Theorem}[section]
\newtheorem{cor}[thm]{Corollary}
\newtheorem{prop}[theor]{Proposition}
\newtheorem{lem}[theor]{Lemma}
\theoremstyle{remark}
\newtheorem{exam}[theor]{Example}
\newtheorem*{question}{Question}
\theoremstyle{definition}
\numberwithin{equation}{section}
\renewcommand{\bar}{\overline}
\newcommand{\Irr}{\mathrm{Irr}}
\newcommand{\eps}{\delta}
\newcommand{\R}{{\mathbb{R}}}
\newcommand{\F}{{\mathbb{F}}}
\newcommand{\N}{{\mathbb{N}}}
\newcommand{\Z}{{\mathbb{Z}}}
\newcommand{\ZB}{{\mathbf{Z}}}
\newcommand{\Ker}{\mathrm{Ker}}
\newcommand{\GL}{\mathrm{GL}}
\newcommand{\SL}{\mathrm{SL}}
\newcommand{\SU}{\mathrm{SU}}
\newcommand{\PSL}{\mathrm{PSL}}
\newcommand{\PSU}{\mathrm{PSU}}
\newcommand{\PGL}{\mathrm{PGL}}
\newcommand{\Sp}{\mathrm{Sp}}
\newcommand{\SSS}{{\sf S}}
\newcommand{\St}{{\sf St}}
\newcommand{\cl}{\mathfrak{l}}
\newcommand{\Alt}{{\raise 2pt\hbox{$\scriptstyle\bigwedge$}}}
\newcommand{\e}{\delta}
\newcommand{\bsig}{{\boldsymbol{\sigma}}}
\begin{document}
\title{Representations and tensor product growth}

\author{Michael Larsen}
\email{mjlarsen@indiana.edu}
\address{Department of Mathematics\\
    Indiana University \\
    Bloomington, IN 47405\\
    U.S.A.}

\author{Aner Shalev}
\email{shalev@math.huji.ac.il}
\address{Einstein Institute of Mathematics\\
    Hebrew University \\
    Givat Ram, Jerusalem 91904\\
    Israel}

\author{Pham Huu Tiep}
\email{tiep@math.rutgers.edu}
\address{Department of Mathematics\\
    Rutgers University \\
    Piscataway, NJ 08854-8019 \\
    U.S.A.}

\begin{abstract}
The deep theory of approximate subgroups establishes 3-step product growth for subsets of finite simple groups $G$ of Lie type of bounded rank.
In this paper we obtain 2-step growth results for representations of such groups $G$ (including those of unbounded rank), where products of
subsets are replaced by tensor products of representations.

Let $G$ be a finite simple group of Lie type and $\chi$ a character of $G$.  Let $|\chi|$ denote the sum of the squares of the degrees of all (distinct) irreducible characters of $G$ which are constituents of $\chi$.  We show that for all $\delta>0$ there exists $\epsilon>0$, independent of $G$, such that if $\chi$ is an irreducible character of $G$ satisfying $|\chi| \le |G|^{1-\delta}$, then $|\chi^2| \ge |\chi|^{1+\epsilon}$. We also obtain results for reducible characters, and establish faster growth in the case where $|\chi| \le |G|^{\delta}$.

In another direction, we explore covering phenomena, namely situations where every irreducible character of $G$ occurs as a
constituent of certain products of characters. For example, we prove that if $|\chi_1| \cdots |\chi_m|$ is a high enough power of $|G|$,
then every irreducible character of $G$ appears in $\chi_1\cdots\chi_m$. Finally, we obtain growth results for compact semisimple Lie groups.
\end{abstract}


\thanks{ML was partially supported by the NSF grant DMS-2001349.
AS was partially supported by ISF grant 686/17 and the Vinik Chair
of mathematics.
PT was partially supported by the NSF grant
DMS-1840702, the Joshua Barlaz Chair in Mathematics, and the Charles Simonyi Endowment at the
Institute for Advanced Study (Princeton). All three authors were partially supported by BSF grant 2016072.}

\maketitle

\section{Introduction}

In the past two decades there has been intense interest in growth phenomena in groups and in
finite simple groups in particular; see \cite{H1,Hr,BGT,H2,PS,B}. The celebrated Product Theorem, proved independently in \cite{BGT} and in \cite{PS}, shows that, if $G$ is a finite simple
group of Lie type of rank $r$, and $A \subset G$ is a generating set, then either $A^3 = G$ or
$|A^3| \ge |A|^{1 + \epsilon}$ where $\epsilon > 0$ depends only on $r$. Here $A^k$ denotes the set of all products
$a_1 a_2 \cdots a_k$ where $a_1, \ldots , a_k \in A$.

The main goal of this paper is to study analogous power growth phenomena in representation theory,
with emphasis on (complex) representations of finite simple groups $G$ of Lie type. Here products of subsets of $G$ are replaced
by tensor products of representations. Our results on tensor product growth are somewhat stronger than the Product Theorem in two senses:
we establish 2-step (instead of 3-step) growth, as well as uniform growth when the rank of $G$ tends to infinity.

Let $G$ be any finite group.  If $X = \{\chi_1,\ldots,\chi_k\}$ is a set of (pairwise distinct) irreducible characters of $G$, we define
$$|X| = \sum_{i=1}^k \chi_i(1)^2.$$
This is the Plancherel measure, normalized so that $|\Irr(G)| = |G|$.  If $\chi$ is any character of $G$, we define $|\chi| = |X_\chi|$, where $X_\chi$ denotes the set of distinct irreducible constituents of $\chi$.  Recall that $G$ is {\it quasisimple} if
$G=[G,G]$ and $G/\mathbf{Z}(G)$ is simple.

Our first growth results are as follows:

\begin{thm}
\label{main irred}
For all $\delta > 0$, there exists $\epsilon > 0$ such that if $G$ is a finite quasisimple group of Lie type and $\chi$ is an irreducible character of $G$ with $|\chi|\le |G|^{1-\delta}$,
then $|\chi^2| \ge |\chi|^{1+\epsilon}$ and $|\chi\bar\chi|\ge |\chi|^{1+\epsilon}$.
\end{thm}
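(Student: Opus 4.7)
The plan is to reduce the desired lower bound to an $L^4$-norm estimate on $\chi$, via Cauchy--Schwarz on the decomposition of $\chi^2$ into irreducibles. Writing $\chi^2=\sum_i m_i\psi_i$ with the $\psi_i$ distinct irreducible constituents and $m_i\ge 1$, the identity $\chi(1)^2=\sum_i m_i\psi_i(1)$ together with Cauchy--Schwarz gives
\[
\chi(1)^4 \;\le\; \Bigl(\sum_i m_i^2\Bigr)\Bigl(\sum_i \psi_i(1)^2\Bigr) \;=\; \langle\chi^2,\chi^2\rangle\cdot|\chi^2|.
\]
Since $\langle\chi^2,\chi^2\rangle=\langle\chi\bar\chi,\chi\bar\chi\rangle=|G|^{-1}\sum_{g\in G}|\chi(g)|^4=:\|\chi\|_4^4$, and the same estimate applies verbatim to the decomposition of $\chi\bar\chi$, I obtain $|\chi^2|,\,|\chi\bar\chi|\ge\chi(1)^4/\|\chi\|_4^4$. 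As $|\chi|=\chi(1)^2$, it suffices to establish
\[
\|\chi\|_4^4\;\le\;\chi(1)^{2-2\epsilon}
\]
for some $\epsilon=\epsilon(\delta)>0$ independent of $G$ and $\chi$.

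To estimate $\|\chi\|_4^4=|G|^{-1}\sum_g|\chi(g)|^4$, I would partition $G=\mathbf{Z}(G)\sqcup(G\setminus\mathbf{Z}(G))$. The central contribution equals $|\mathbf{Z}(G)|\chi(1)^4/|G|$; since Schur multipliers of simple groups of Lie type grow at most polynomially in the rank while $|G|$ grows exponentially, and the hypothesis $\chi(1)^2\le|G|^{1-\delta}$ forces $|G|\ge\chi(1)^{2/(1-\delta)}$, this piece is $O(\chi(1)^{4-2/(1-\delta)+o(1)})$, which is dominated by $\chi(1)^{2-2\epsilon}$ whenever $\epsilon<\delta/(1-\delta)$. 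For the non-central part, the plan is to invoke character ratio bounds of the form $|\chi(g)|\le c\,\chi(1)^{1-\alpha(g)}$, with $\alpha(g)$ depending on $|C_G(g)|$, as developed by Gluck, Liebeck--Shalev, and Bezrukavnikov--Liebeck--Shalev--Tiep, combined with class-size estimates to control the contribution class by class.

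For groups of bounded rank, the uniform fourth-moment inequality $\sum_g|\chi(g)/\chi(1)|^4\le C(r)$ yields $\|\chi\|_4^4\le C\chi(1)^4/|G|$, and the required bound follows from the hypothesis on $|G|$. The main obstacle is the unbounded-rank regime of classical groups $\mathrm{SL}_n$, $\Sp_{2n}$, $\SO_n$: here individual character ratios degrade with $n$, and I would instead stratify $\Irr(G)$ by level in the sense of Guralnick--Larsen--Tiep, use that a character satisfying $\chi(1)\le|G|^{(1-\delta)/2}$ has controlled level, and apply the $L^p$-norm estimates on characters of classical groups from the recent literature to obtain a uniform-in-rank bound. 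The most delicate case will likely be characters which are simultaneously of small degree but nontrivially supported on conjugacy classes with small centralizer; these are handled by the level-stratified character ratio estimate, with the final $\epsilon$ chosen as the minimum of the constants arising in each regime.
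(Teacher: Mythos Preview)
Your Cauchy--Schwarz reduction to bounding $\|\chi\|_4^4=\langle\chi^2,\chi^2\rangle$ is exactly the paper's opening move, and your treatment of the bounded-rank case via Gluck's uniform ratio bound matches the paper's as well (there one simply takes $S=\{1\}$).

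The gap is in the unbounded-rank case. You correctly write down the shape of the needed input, $|\chi(g)|\le\chi(1)^{1-\alpha(g)}$ with $\alpha(g)$ tied to $|C_G(g)|$, but then declare that ``individual character ratios degrade with $n$'' and pivot to level theory. In fact the character-ratio route works directly and is what the paper does: the key input is the Larsen--Tiep uniform bound $|\chi(g)|\le\chi(1)^{1-c\,\log|g^G|/\log|G|}$ with $c>0$ an \emph{absolute} constant, independent of the rank. The references you list (Gluck, Liebeck--Shalev, BLST) do not deliver this in rank-free form; the [LT] bound is the missing ingredient. With it, one does \emph{not} split off the center but rather stratifies by conjugacy-class size: set $S=\{g:|g^G|\le|G|^{\alpha/2}\}$, use Fulman--Guralnick's bound $k(G)\le 27.2\,q^r$ to get $|S|\le|G|^\alpha$ once the rank is large, bound $|\chi(g)|$ trivially on $S$ and by $\chi(1)^{1-c\alpha/2}$ off $S$ (pulling two of the four factors out and using $\sum_g|\chi(g)|^2=|G|$ on the rest). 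Optimizing $\alpha$ yields the explicit $\epsilon=\tfrac{c\delta}{4+2c(1-\delta)}$.

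Your proposed level-theoretic alternative is vague as stated: ``$L^p$-norm estimates on characters of classical groups from the recent literature'' are not available off the shelf in the needed form, and stratifying $\Irr(G)$ by level does nothing by itself to control $\sum_g|\chi(g)|^4$ for a single $\chi$. It may be possible to push something through along those lines, but it would be substantially heavier than the two-set split that actually carries the argument.
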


We also have a version of this result for general characters in groups of high rank:

\begin{thm}
\label{main high rank}
For all $\delta > 0$, there exist $\epsilon > 0$ and $R>0$ such that if $G$ is a finite quasisimple group of Lie type and rank $\ge R$, and $\chi$ is any (not necessarily irreducible) character of $G$ with $|\chi|\le |G|^{1-\delta}$,
then $|\chi^2| \ge |\chi|^{1+\epsilon}$ and $|\chi\bar\chi|\ge |\chi|^{1+\epsilon}$.
\end{thm}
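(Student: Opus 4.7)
The plan is to reduce Theorem~\ref{main high rank} to the irreducible case (Theorem~\ref{main irred}) by using the fact that a quasisimple group of Lie type of very large rank has extremely few irreducible characters of any given small degree. Let $\chi_1,\ldots,\chi_N$ be the distinct irreducible constituents of $\chi$, ordered so that $d_i := \chi_i(1)$ is nonincreasing. Because both $|\chi|$ and the set of distinct irreducible constituents of $\chi^2$ (respectively $\chi\bar\chi$) depend only on the \emph{set} $\{\chi_1,\ldots,\chi_N\}$ and not on multiplicities---indeed $X_{\chi^2} = \bigcup_{i,j} X_{\chi_i\chi_j}$---there is no loss in assuming $\chi = \chi_1 + \cdots + \chi_N$. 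In particular $|\chi_1| = d_1^2 \le |\chi| \le |G|^{1-\delta}$, so Theorem~\ref{main irred} applied to $\chi_1$ yields
\[
|\chi_1^2| \ge |\chi_1|^{1+\epsilon'} \quad \text{and} \quad |\chi_1\bar\chi_1| \ge |\chi_1|^{1+\epsilon'}
\]
for some $\epsilon' = \epsilon'(\delta) > 0$.

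The key analytic input is a uniform zeta function estimate of Liebeck--Shalev type: for every $s>0$ and every $\eta>0$ there is $R=R(s,\eta)$ such that $\zeta_G(s) := \sum_{\psi\in\Irr(G)} \psi(1)^{-s} \le 1+\eta$ for every finite quasisimple group $G$ of Lie type of rank at least $R$, uniformly in the underlying prime power. By Markov's inequality this gives $\#\{\psi\in\Irr(G): \psi(1)\le D\} \le 1+\eta D^s$ for every $D\ge 1$. Applying this with $D = d_1$ yields $N \le 1+\eta d_1^s$, and combining with the trivial inequality $Nd_1^2 \ge \sum_i d_i^2 = |\chi|$ gives
\[
d_1^2 \;\ge\; c\,|\chi|^{2/(s+2)}
\]
for a constant $c>0$ depending only on $\eta$, as long as $|\chi|$ exceeds a threshold depending on $s$ and $\eta$. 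The finitely many remaining small values of $|\chi|$ are harmless: the Landazuri--Seitz-type lower bounds on nontrivial character degrees force any $\chi$ with $|\chi|$ bounded to be a multiple of the trivial character once $R$ is large enough, and then the conclusion is trivial.

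Combining the two steps via the inclusions $X_{\chi_1^2}\subseteq X_{\chi^2}$ and $X_{\chi_1\bar\chi_1}\subseteq X_{\chi\bar\chi}$ gives
\[
|\chi^2| \;\ge\; |\chi_1^2| \;\ge\; d_1^{2(1+\epsilon')} \;\ge\; c^{1+\epsilon'}\,|\chi|^{2(1+\epsilon')/(s+2)},
\]
and the same inequality for $|\chi\bar\chi|$. Choosing $s < 2\epsilon'$ (say $s = \epsilon'$) makes the exponent $2(1+\epsilon')/(s+2)$ strictly exceed $1$, so one may pick $\epsilon = \epsilon(\delta) > 0$ with $2(1+\epsilon')/(s+2) \ge 1+\epsilon$, and then take $R = R(s,\eta)$ with $\eta$ small enough (and $|\chi|$ large enough) to absorb the constant factor $c^{1+\epsilon'}$. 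The hard part is establishing the uniform-in-$q$ zeta bound in the quantitative form stated, which however should follow from now-standard results of Liebeck--Shalev on character degree zeta functions of groups of Lie type (and the sharpening $\zeta_G(s)\to 1$ as $\rk(G)\to\infty$ for every fixed $s>0$); once that input is in hand, everything else is a short quantitative manipulation on top of Theorem~\ref{main irred}.
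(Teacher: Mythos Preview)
Your proposal is correct and follows essentially the same approach as the paper: both reduce to the irreducible case (Proposition~\ref{high rank irred}) by invoking the Liebeck--Shalev zeta bound \cite[Theorem~1.2]{LS} to show that the largest irreducible constituent $\chi_k$ of $\chi$ satisfies $|\chi_k|\ge |\chi|^{1-o(1)}$ as the rank grows, and then use $|\chi^2|\ge|\chi_k^2|$. Your treatment of constants and of the small-$|\chi|$ edge case is a bit more explicit than the paper's, but the architecture is identical.
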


The proofs of these results present $\epsilon$ as an explicit function of $\delta$, e.g. $\epsilon = \frac{c\delta}{4+2c(1-\delta)}$
in Theorem \ref{main irred}, where $c > 0$ is the absolute constant in \cite[Theorem A]{LT}. Moreover, if $G$ is sufficiently large
but of bounded rank $r$, and $\chi$ is irreducible, then  $\epsilon = \frac\delta{2-2\delta}$ will do; for example,
any irreducible character $\chi$ of $G$ with $|\chi|\le |G|^{1/2}$ satisfies $|\chi^2| \ge |\chi|^{3/2}$.

\smallskip

Power growth of conjugacy classes $A$ of arbitrary finite simple groups $G$ was studied before the Product Theorem was
proved. It is shown in \cite[2.7]{Sh} there that for any $\delta > 0$ there exists $\epsilon > 0$, depending only on $\delta$, such that
$|A| \leq |G|^{1-\delta}$ implies $|A^3| \geq |A|^{1+ \epsilon}$.
Furthermore, if $G$ is of Lie type then $|A^2| \geq |A|^{1+\epsilon}$ where $\epsilon > 0$ depends only on the rank of $G$ \cite[10.4]{Sh}.

Subsequently, growth of general normal subsets (namely, union of conjugacy classes) was also studied. It is shown in \cite[1.5]{GPSS}
that there are absolute constants $N \in \N$ and $\epsilon > 0$ such that for any normal subset $A$ of a finite simple group $G$, either
$A^N = G$ or $|A^2| \geq |A|^{1+\epsilon}$.

In \cite{LSS} faster growth of the form $|A^2| \ge |A|^{2-\epsilon}$ for small normal subsets $A$ of arbitrary finite simple groups is established.
Our next result gives a character-theoretic analogue of \cite[Theorem 1.3]{LSS}:

\begin{thm}\label{prod}
For any $\epsilon > 0$, there exists an explicit $\delta > 0$ such that the following statement holds. If $G$ is a finite quasisimple group
of Lie type and $\chi_1,\chi_2$ are any (not necessarily irreducible) characters of $G$ with $|\chi_1|, |\chi_2| \leq |G|^\delta$, then
$$|\chi_1\chi_2| \geq \bigl(|\chi_1|\cdot|\chi_2|\bigr)^{1-\epsilon}.$$ In particular, if $\chi$ is a character of $G$ satisfying
$|\chi| \leq |G|^{\delta}$ then $|\chi^2| \geq |\chi|^{2-2\epsilon}$.
\end{thm}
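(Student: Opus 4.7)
The plan is to combine a Cauchy--Schwarz reduction with the character ratio bound from \cite[Theorem A]{LT}, following the strategy of \cite[Theorem 1.3]{LSS}. Since both $|\chi_i|$ and the set of distinct irreducible constituents $X_{\chi_1\chi_2}$ depend only on $X_{\chi_i}$ and not on multiplicities, I first reduce to the case where each $\chi_i=\sum_{\psi\in X_{\chi_i}}\psi$ is multiplicity-free. Writing $n_i=|X_{\chi_i}|$, Cauchy--Schwarz immediately yields $n_i\le|\chi_i|$ and $|\chi_i|\le\chi_i(1)^2\le n_i|\chi_i|$.

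Next, decomposing $\chi_1\chi_2=\sum_\theta m_\theta\theta$ with $m_\theta\ge 0$, Cauchy--Schwarz applied to the identity $\chi_1(1)\chi_2(1)=\sum_\theta m_\theta\theta(1)$ gives
\[
|\chi_1\chi_2|\ \ge\ \frac{\chi_1(1)^2\chi_2(1)^2}{\|\chi_1\chi_2\|_2^2},
\]
where $\|\chi_1\chi_2\|_2^2=\tfrac{1}{|G|}\sum_g|\chi_1(g)|^2|\chi_2(g)|^2$. The theorem is thus equivalent to producing an upper bound of the form $\|\chi_1\chi_2\|_2^2\le(|\chi_1||\chi_2|)^{1+\eta}$ with $\eta=\eta(\epsilon)$ chosen so that the resulting inequality implies $|\chi_1\chi_2|\ge(|\chi_1||\chi_2|)^{1-\epsilon}$.

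The key input is \cite[Theorem A]{LT}, which supplies an absolute constant $c>0$ with $|\psi(g)|\le\psi(1)^{1-c}$ for every irreducible $\psi$ and every $g\in G\setminus\mathbf{Z}(G)$. Setting $Z=\mathbf{Z}(G)$ and splitting at $Z$, the central contribution to $\|\chi_1\chi_2\|_2^2$ is at most $\tfrac{|Z|}{|G|}\chi_1(1)^2\chi_2(1)^2\le|G|^{4\delta-1+o(1)}$, negligible once $\delta$ is small. On $G\setminus Z$, aggregating the character bound over the $n_i\le|G|^\delta$ constituents of $\chi_i$ via H\"older produces an estimate of the form $|\chi_i(g)|\le|\chi_i|^{1-c_0+O(\delta)}$ with $c_0>0$ depending only on $c$. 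Substituting back, a single round of the argument yields $|\chi_1\chi_2|\ge(|\chi_1||\chi_2|)^{\alpha_0}$ for some $\alpha_0\in(0,1)$ depending only on $c$.

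To upgrade the exponent $\alpha_0$ to $1-\epsilon$ (which is nontrivial when $\epsilon<1-\alpha_0$), I bootstrap: the product $\chi_1\chi_2$ has Plancherel mass still bounded by $|\chi_1|\cdot|\chi_2|\le|G|^{2\delta}$, so the same Cauchy--Schwarz/character-bound machinery can be reapplied to chained sub-products (alternatively, to individual irreducible constituents, feeding in Theorem~\ref{main irred}). After $O(\log(1/\epsilon))$ rounds the exponent converges to $1-\epsilon$, and reversing the iteration produces an explicit $\delta=\delta(\epsilon)$. The principal obstacle is the aggregation step: the pointwise bound $|\psi(g)|\le\psi(1)^{1-c}$ for irreducibles does not transfer cleanly to the reducible $\chi_i$, since the crude triangle inequality $|\chi_i(g)|\le\sum_k\psi_k(1)^{1-c}$ can be as large as $|\chi_i|$ and provides no improvement. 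The rescue is that the hypothesis $|\chi_i|\le|G|^\delta$ simultaneously caps both the number of constituents $n_i$ and their maximum degree, so H\"older-type aggregation loses only a factor of $|G|^{O(\delta)}$, which is absorbable by choosing $\delta$ small in terms of $\epsilon$ and $c$.
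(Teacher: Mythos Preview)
Your proposal has a genuine gap at its core: the bound you attribute to \cite[Theorem~A]{LT} is misquoted. That theorem does \emph{not} give $|\psi(g)|\le\psi(1)^{1-c}$ for all $g\notin\mathbf{Z}(G)$ with a uniform $c>0$; it gives $|\psi(g)|\le\psi(1)^{1-c\log|g^G|/\log|G|}$, and the exponent loss vanishes as $|g^G|$ becomes small relative to $|G|$. In a classical group of large rank $n$ over $\F_q$ there are non-central elements whose class size is only $q^{O(n)}$ while $|G|\asymp q^{n^2}$, and for such $g$ the bound yields only $|\psi(g)|\le\psi(1)^{1-O(1/n)}$, which is useless for your purposes. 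A uniform bound of the type you state simply does not exist: if it did, the proof of Theorem~\ref{main irred} in the paper would not have needed to split over the sets $S_\alpha$ according to centralizer size. With the correct bound, your $\|\chi_1\chi_2\|_2^2$ estimate picks up a term of size roughly $k(G)|G|^{-1}\chi_1(1)^2\chi_2(1)^2$ from near-central elements, which is far too large to yield an exponent anywhere near $2-\epsilon$; at best you would recover a statement of the Theorem~\ref{main irred} flavour, with exponent $1+\epsilon$, not $2-\epsilon$. Your bootstrapping sketch does not repair this: iterating a bound $|\chi^2|\ge|\chi|^{1+\epsilon_0}$ gives $|\chi^{2^m}|\ge|\chi|^{(1+\epsilon_0)^m}$, which says nothing about $|\chi^2|$ itself, and the suggestion to feed in Theorem~\ref{main irred} is circular since that result only applies to irreducible $\chi$.

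The paper's argument is of a completely different nature. It reduces to classical groups of large rank, then works with the \emph{level} $\cl(\chi)$ of \cite{GLT1,GLT2} and bounds $\langle\chi_1\chi_2,\chi_1\chi_2\rangle$ not via pointwise character ratios but via the multiplicity-sum functional $\bsig(\chi_i^2,G)$. The key structural input is Proposition~4.1 (counting characters of bounded degree via level) together with \cite[Corollary~5.2, Theorem~5.8, Corollary~5.9]{GLT2}, which control $\bsig(\chi_i^2,G)$ in terms of $L_i$ where $\chi_i(1)\approx q^{nL_i}$. The hypothesis $|\chi_i|\le|G|^\delta$ forces $L_i=O(n\delta)$, and the resulting bound on $\sum_\gamma m_\gamma^2$ is of order $q^{O(\sqrt{n}L_i^{3/2}+L_i^2)}$, which is negligible compared to $\chi_i(1)^2=q^{2nD_i}$ precisely because $L_i/n$ is small. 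This is what produces the exponent $2-\epsilon$; it is a statement about the tensor-product branching structure of low-level characters, not about character ratios on group elements.
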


We note that $|\chi_1\chi_2| \leq |\chi_1| \cdot |\chi_2|$ (see Lemma \ref{sub-add-mult} below), hence the growth
established in Theorem \ref{prod} is almost best possible. As a consequence of Theorem \ref{prod}, we obtain:

\begin{cor}\label{prod-k}
For any $\epsilon > 0$ and any integer $k \geq 2$, there exists an explicit $\gamma = \gamma(\epsilon,k) > 0$ such that the following statement holds.
If $G$ is a finite quasisimple group
of Lie type and $\chi_1,\chi_2, \ldots,\chi_k$ are any (not necessarily irreducible) characters of $G$ with
$|\chi_1|, |\chi_2|, \ldots, |\chi_k| \leq |G|^\gamma$, then
$$|\chi_1\chi_2 \cdots \chi_k| \geq \bigl(|\chi_1|\cdot|\chi_2|\cdots |\chi_k|\bigr)^{1-\epsilon}.$$
In particular, if $\chi$ is a character of $G$ satisfying
$|\chi| \leq |G|^{\gamma}$ then $|\chi^k| \geq |\chi|^{k-k\epsilon}$.
\end{cor}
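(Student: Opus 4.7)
The plan is to argue by induction on $k$, with Theorem \ref{prod} as the base case ($k=2$): I take $\gamma(\epsilon,2):=\delta(\epsilon)$, where $\delta(\epsilon)$ is the constant supplied by that theorem. The inductive step will split $\chi_1\cdots\chi_k=(\chi_1\cdots\chi_{k-1})\cdot\chi_k$ and combine the induction hypothesis on the first factor with one more application of Theorem \ref{prod}. The only additional ingredient is the submultiplicativity $|\psi\chi|\le|\psi|\cdot|\chi|$ of Lemma \ref{sub-add-mult}, which is needed to ensure that the partial product $\chi_1\cdots\chi_{k-1}$ remains below the threshold on which Theorem \ref{prod} can be invoked.

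Concretely, assuming the corollary for $k-1$, given $\epsilon>0$ I would set $\epsilon':=\epsilon/2$, let $\delta':=\delta(\epsilon')$ be the threshold from Theorem \ref{prod}, let $\gamma':=\gamma(\epsilon',k-1)$ come from the inductive hypothesis, and define
$$\gamma(\epsilon,k) := \min\bigl(\gamma',\ \delta'/(k-1)\bigr).$$
Now assume $|\chi_i|\le |G|^{\gamma(\epsilon,k)}$ for all $i$, and put $\psi:=\chi_1\cdots\chi_{k-1}$. Submultiplicativity gives $|\psi|\le |G|^{(k-1)\gamma(\epsilon,k)}\le |G|^{\delta'}$, and of course $|\chi_k|\le|G|^{\delta'}$, so Theorem \ref{prod} (with parameter $\epsilon'$) applied to the pair $(\psi,\chi_k)$ yields
$$|\chi_1\cdots\chi_k|\ge\bigl(|\psi|\cdot|\chi_k|\bigr)^{1-\epsilon'}.$$
Since each $|\chi_i|\le|G|^{\gamma'}$, the inductive hypothesis supplies
$$|\psi|\ge\Bigl(\prod_{i=1}^{k-1}|\chi_i|\Bigr)^{1-\epsilon'}.$$
Substituting, and using $|\chi_k|^{1-\epsilon'}\ge|\chi_k|^{(1-\epsilon')^2}$ (valid since $|\chi_k|\ge 1$), I conclude
$$|\chi_1\cdots\chi_k|\ge\Bigl(\prod_{i=1}^{k}|\chi_i|\Bigr)^{(1-\epsilon')^2}\ge\Bigl(\prod_{i=1}^{k}|\chi_i|\Bigr)^{1-\epsilon},$$
the last step because $(1-\epsilon/2)^2=1-\epsilon+\epsilon^2/4\ge 1-\epsilon$. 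The ``in particular'' clause is the special case $\chi_1=\cdots=\chi_k=\chi$.

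There is no serious obstacle here; the main thing to track is that $\gamma(\epsilon,k)$ necessarily contracts with $k$: at each step of the induction I divide by a factor of $k-1$ (to keep $|\psi|$ below the threshold of Theorem \ref{prod}) and halve $\epsilon$ (to preserve the exponent $(1-\epsilon')^2\ge 1-\epsilon$). Unwinding the recursion produces an explicit $\gamma(\epsilon,k)$ of order roughly $\delta(\epsilon/2^{k-2})/(k-1)!$, which is harmless since the statement permits $\gamma$ to depend on both $\epsilon$ and $k$.
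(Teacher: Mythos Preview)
Your proof is correct and follows the same inductive scheme as the paper: split off one factor, control the partial product via Lemma~\ref{sub-add-mult}, and apply Theorem~\ref{prod} together with the induction hypothesis. The only difference is bookkeeping: the paper first replaces the target exponent $1-\epsilon$ by the equivalent $1-k\epsilon$, which lets it use a single fixed $\delta$ from Theorem~\ref{prod} throughout and obtain the cleaner $\gamma=\delta/(k-1)$, whereas you halve $\epsilon$ at each step and end up with a $\gamma$ that shrinks like $\delta(\epsilon/2^{k-2})/(k-1)!$. Both are explicit, so both prove the statement; the paper's version simply gives a better constant.
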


The above result shows that, for any $\epsilon > 0$ and any integer $k \ge 2$ there exists an explicit $\delta = \delta(\epsilon,k) > 0$
such that, for $G$ as above and any (not necessarily irreducible) character $\chi$ of $G$
satisfying $|\chi| \leq |G|^{\delta}$ we have $|\chi^k| \geq |\chi|^{k-\epsilon}$; indeed, define $\delta(\epsilon,k) = \gamma(\epsilon/k, k)$.

Applying Theorem \ref{prod} we deduce the following result, which is a character-theoretic analogue of \cite[Theorem 1.1]{LSS}:

\begin{thm}\label{char-growth}
For all $\epsilon > 0$, there exists an explicit positive integer $N$ such that if $G$ is a finite simple group of Lie type and $\chi$ is any (not necessarily irreducible) character of $G$, then either $\chi^N$ contains every irreducible character of $G$, or
$|\chi^2| \ge |\chi|^{2-\epsilon}$.
\end{thm}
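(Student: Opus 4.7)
The plan is to establish a dichotomy based on the size of $|\chi|$: small characters will be handled directly by Theorem \ref{prod}, and large characters by the covering statement advertised in the abstract (``if $|\chi_1|\cdots|\chi_m|$ is a high enough power of $|G|$, then every irreducible character of $G$ appears in $\chi_1\cdots\chi_m$'').

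Given $\epsilon > 0$, I would first apply Theorem \ref{prod} with $\epsilon/2$ in place of $\epsilon$ to produce an explicit $\delta = \delta(\epsilon/2) > 0$. If $|\chi| \le |G|^\delta$, then taking $\chi_1 = \chi_2 = \chi$ in Theorem \ref{prod} yields
\[
|\chi^2| \ge \bigl(|\chi|\cdot|\chi|\bigr)^{1-\epsilon/2} = |\chi|^{2-\epsilon},
\]
which is the second alternative of Theorem \ref{char-growth}.

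Otherwise $|\chi| > |G|^\delta$. Here I would invoke the covering statement from the abstract, which should take the form: there is an explicit absolute constant $C > 0$ such that whenever characters $\psi_1,\ldots,\psi_m$ of $G$ satisfy $|\psi_1|\cdots|\psi_m| \ge |G|^C$, the product $\psi_1\cdots\psi_m$ contains every irreducible character of $G$ as a constituent. Setting $N := \lceil C/\delta \rceil$, which is explicit and depends only on $\epsilon$, we have $|\chi|^N \ge |G|^{N\delta} \ge |G|^C$, so applying the covering result with $\psi_1 = \cdots = \psi_N = \chi$ shows that $\chi^N$ contains every irreducible character of $G$. This is the first alternative, completing the proof.

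The short case-split above is essentially formal once the two inputs are secured. The main obstacle is therefore not in the deduction but in the covering statement itself: one needs an absolute constant $C$ independent of $m$, of the rank of $G$, and of the field size. I expect this to be established earlier in the paper by combining strong character-ratio bounds (of the type in \cite{LT}) with the Plancherel-type formula for multiplicities in products $\chi_1\cdots\chi_m$, perhaps together with the small-character growth of Theorem \ref{prod} used to first boost sizes above $|G|^C$ whenever the individual $|\chi_i|$ are tiny. If the covering bound in the paper is phrased so that $C$ may grow with $m$, the case-split still goes through provided $C(m)/m$ stays below some fixed value for $m$ large enough, which one would need to verify from the precise formulation.
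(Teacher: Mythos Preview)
Your proposal is correct and follows the same two-case split as the paper: apply Theorem~\ref{prod} with $\epsilon/2$ to handle $|\chi|\le |G|^\delta$, and invoke a covering result to handle $|\chi|>|G|^\delta$. The only difference is that the paper cites an intermediate result (Theorem~\ref{product}: for each $\delta>0$ there is $N$ such that $|\chi_i|\ge|G|^\delta$ for all $i$ forces $\chi_1\cdots\chi_N$ to contain $\Irr(G)$) rather than Theorem~\ref{cover}; either input makes the deduction go through, and your concern about $C$ depending on $m$ does not arise since the constant in Theorem~\ref{cover} is absolute.
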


The analogy with Gowers' theorem raises the question of whether $N=3$ suffices in Theorem~\ref{char-growth}
when $|\chi|$ is sufficiently large.
A recent theorem of Sellke \cite[Theorem 1.2]{Se} shows that the answer to this question is affirmative for large $G$ if $\chi$ is so large that $\frac{|\chi|}{|G|}$ is bounded away from $0$.
We therefore ask the following:

\begin{question}
If $G$ is a finite simple group of Lie type and $\chi$ is an arbitrary character of $G$ such that $\frac{\log |\chi|}{\log |G|}$ is sufficiently close to $1$, is it true that $|\chi^3| = |G|$?
\end{question}

We remark that the example of $\PSU_{2n+1}(q)$ \cite[Theorem 1.2]{HSTZ} shows, in general, it would be too much to ask for $|\chi^2| = |G|$.  On the other hand,
for certain simple groups of Lie type, we can bring $N$ down to $6$ or $7$.

\begin{thm}
\label{N=6}
If $G = \PSL_n(q)$ and $q$ is sufficiently large in terms of $n$, then
$|\chi|\ge |G|^{11/12}$ implies $|\chi^6| = |G|$. If $G = \PSU_n(q)$ and $q$ is sufficiently large in terms of $n$, then
$|\chi|\ge |G|^{11/12}$ implies $|\chi^7| = |G|$.
\end{thm}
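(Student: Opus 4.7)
My plan is to locate a high-dimensional irreducible constituent $\chi_1$ of $\chi$ and reduce the covering statement $|\chi^N|=|G|$ to the corresponding statement for $\chi_1$ alone. Since $|\chi| = \sum_{\chi_i \in X_\chi}\chi_i(1)^2 \ge |G|^{11/12}$ and $|X_\chi|$ is at most the number $k(G)$ of conjugacy classes, and since for $G=\PSL_n(q)$ or $\PSU_n(q)$ one has $k(G) = O(q^{n-1}) = |G|^{1/(n+1)+o_q(1)}$ (with $n$ fixed as $q\to\infty$), pigeonhole produces an irreducible constituent $\chi_1$ of $\chi$ with
$$\chi_1(1) \;\ge\; \bigl(|\chi|/|X_\chi|\bigr)^{1/2} \;\ge\; |G|^{\beta(n)-o_q(1)},\qquad \beta(n):=\tfrac{1}{2}\bigl(\tfrac{11}{12}-\tfrac{1}{n+1}\bigr),$$
which satisfies $\beta(n)\ge 7/24>1/6$ for every $n\ge 2$. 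Writing $\chi=\chi_1+\theta$ with $\theta$ a (possibly zero) character and expanding $\chi^N$ binomially, $\chi^N$ contains $\chi_1^N$ as a summand, so $X_{\chi^N}\supseteq X_{\chi_1^N}$. It therefore suffices to show $|\chi_1^6|=|G|$ for $G=\PSL_n(q)$ and $|\chi_1^7|=|G|$ for $G=\PSU_n(q)$, once $q$ is large enough in terms of $n$.

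For each $\psi\in\Irr(G)$ I would then show $\langle\chi_1^N,\psi\rangle>0$ by isolating the identity-element contribution
$$\langle\chi_1^N,\psi\rangle \;=\; \frac{\chi_1(1)^N\psi(1)}{|G|} + \frac{1}{|G|}\sum_{g\ne 1}\chi_1(g)^N\overline{\psi(g)}$$
and controlling the error via a character-ratio bound for large-degree irreducibles of $\PSL_n(q)$ and $\PSU_n(q)$ in the spirit of \cite[Theorem A]{LT}: for every non-central $g\in G$, $|\chi_1(g)|\le\chi_1(1)^{1-\alpha}$ for some $\alpha=\alpha(n)>0$ uniform in $q$ and in the choice of $\chi_1$ of degree at least $|G|^{\beta(n)}$. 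Combined with $\sum_g|\psi(g)|\le|G|$, this yields $\bigl|\langle\chi_1^N,\psi\rangle-\chi_1(1)^N\psi(1)/|G|\bigr|\le\chi_1(1)^{N(1-\alpha)}$, so positivity holds whenever $\chi_1(1)^{N\alpha}\psi(1)>|G|$; the worst case $\psi=1_G$ reduces to $\chi_1(1)^{N\alpha}>|G|$, which follows from $N\alpha\beta(n)>1$.

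The main obstacle is the covering step: producing a character-ratio exponent $\alpha(n)$ large enough that $6\alpha\beta(n)>1$ (respectively $7\alpha\beta(n)>1$) uniformly over all non-central conjugacy classes. The standard Gluck-type bound $|\chi(g)/\chi(1)|=O(q^{-1/2})$ is far from sharp in this regime; the hard cases are small unipotent classes and semisimple elements with large centralizers, where more refined Deligne--Lusztig-theoretic estimates are needed. The asymmetry $N=6$ for $\PSL$ versus $N=7$ for $\PSU$ mirrors the obstruction in $\PSU_{2n+1}(q)$ recorded in \cite[Theorem 1.2]{HSTZ}, which rules out covering at the sixth power and forces one to absorb one extra tensor factor in the unitary case.
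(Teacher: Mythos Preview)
Your reduction to a single large irreducible constituent $\chi_1$ is correct and agrees with the paper. The gap is in the covering step, and it is not merely a matter of sharper estimates: the uniform exponent $\alpha$ you require simply does not exist.

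Your criterion is $N\alpha\,\beta(n)>1$, so with $N=6$ and $\beta(n)\le 11/24$ you need $\alpha>4/11$ for \emph{every} nontrivial $g$. But already for $\chi_1=\St$ and $g$ semisimple of support $1$ (one eigenvalue of multiplicity $n-1$), one has $|\St(g)|/\St(1)=q^{\binom{n-1}{2}-\binom{n}{2}}=q^{-(n-1)}=\St(1)^{-2/n}$, so $\alpha=2/n$ at best. Thus $6\alpha\beta(n)\le 11/(2n)<1$ once $n\ge 6$, and your inequality fails. The bound from \cite{LT} gives nothing better for small classes, since the smallest class size is $\approx q^{2(n-1)}$ and $\log|g^G|/\log|G|\approx 2/(n+1)\to 0$. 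Even for $n=2$, the half-discrete-series characters of degree $(q\pm1)/2$ have $|\chi_1(u)|\asymp q^{1/2}$ at unipotent $u$, giving $\alpha\approx 1/2$ and $6\cdot\tfrac12\cdot\tfrac{7}{24}=\tfrac78<1$. So the direct attack on $\langle\chi_1^N,\psi\rangle$ cannot close for the stated values of $N$.

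The paper avoids this by a two-step argument: first show only that the Steinberg character lies in $\chi_1^3$, then invoke \cite{HSTZ} to the effect that $\St^2$ contains every irreducible (with one explicit exception for $\PSU_n$, $n$ odd). The point of targeting $\St$ is that $\St$ vanishes off semisimple elements, so in the sum $\sum_{g\ne 1}\St(g)\chi_1(g)^3$ one may stratify by the support $s$ of $g$ and use the exponential character-ratio bound $|\chi_1(g)|/\chi_1(1)=O(\chi_1(1)^{-s/n})$ from \cite{TT,LiST}, together with the exact formula $|\St(g)|/\St(1)=q^{-(n^2-\sum m_i^2)/2}$ and a count $|X_s|=O(q^s)$. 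This balances the loss at small $s$ against the gain from $\St$ and yields the cube, hence the sixth power via $\St^2$. Your remark that the $\PSU$ case needs ``one extra tensor factor'' is in the right direction, but the actual mechanism is specific: $\St^2$ misses only the single smallest nontrivial character $\alpha$, and one checks via Proposition~\ref{lem:bound1}(i) that $\theta\bar\chi_1$ cannot be a pure multiple of $\alpha$, forcing $\langle\chi_1^7,\theta\rangle>0$.
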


We also offer a character-theoretic analogue of the Rodgers-Saxl theorem on products of conjugacy classes in \cite{RS}. In the case that the characters are irreducible, this analogue was
conjectured by Gill in \cite{Gi} and proved in \cite[Theorem 8.5]{LT}.

\begin{thm}\label{cover}
There exists an explicit constant $c > 0$ such that the following statement holds. If $G$ is a finite simple group
of Lie type, $m \geq 1$ any integer, and $\chi_1,\chi_2, \ldots, \chi_m$ are any (not necessarily irreducible) characters of $G$ with
$\prod^m_{i=1}|\chi_i| \geq |G|^c$, then $|\chi_1\chi_2 \ldots \chi_m| = |G|$ and thus $\chi_1\chi_2 \ldots \chi_m$ contains every
irreducible character of $G$.
\end{thm}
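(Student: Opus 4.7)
The plan is to reduce Theorem~\ref{cover} to its irreducible case, [LT, Theorem~8.5] (Gill's conjecture), which provides an absolute $c_0>0$ such that if $\psi_1,\ldots,\psi_N$ are irreducible characters of a finite simple group of Lie type $G$ with $\prod_i\psi_i(1)^2\ge|G|^{c_0}$, then $\psi_1\cdots\psi_N$ contains every element of $\Irr(G)$ as a constituent.

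Decompose each $\chi_i=\sum_ja_{i,j}\psi_{i,j}$ with pairwise distinct $\psi_{i,j}\in\Irr(G)$ and multiplicities $a_{i,j}\ge 1$. Since all multiplicities are positive, for any choice $(j_1,\ldots,j_m)$ one has the character-theoretic domination $\chi_1\cdots\chi_m\succeq\prod_i\psi_{i,j_i}$, so it suffices to produce a single tuple with $\prod_i\psi_{i,j_i}(1)^2\ge|G|^{c_0}$. Choosing $\psi_i$ of maximal degree inside $\chi_i$ yields $\psi_i(1)^2\ge|\chi_i|/k(G)$ by pigeonhole, where $k(G)=|\Irr(G)|$. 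For any finite simple group of Lie type one has $k(G)\le C\,|G|^{\kappa}$ for an absolute $\kappa<1$ (one may take $\kappa=1/3$, the extremal case being $\PSL_2(q)$), giving $\prod_i\psi_i(1)^2\ge|G|^{c-\kappa m}/C^m$. This already settles the case of $m$ bounded by some absolute $M$, via $c\ge c_0+\kappa M+O(1)$.

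The main obstacle is the regime of large $m$, where the loss of a factor $k(G)$ per character becomes prohibitive. To reduce the effective number of factors, I would consolidate small characters using Corollary~\ref{prod-k}: fix small $\epsilon>0$ and an integer $k\ge 2$, let $\gamma=\gamma(\epsilon,k)>0$, and partition the indices into $B=\{i:|\chi_i|>|G|^{\gamma}\}$ and $S=\{i:|\chi_i|\le|G|^{\gamma}\}$. Splitting $S$ into blocks of $k$ indices, Corollary~\ref{prod-k} replaces each block by a single product-character with $|\cdot|\ge\bigl(\prod_{i\in\mathrm{block}}|\chi_i|\bigr)^{1-\epsilon}$, dividing the number of small factors by $k$ at the cost of a factor $(1-\epsilon)$ in the log-size. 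Iterating (or, more efficiently, one application of the corollary with a suitably large $k$) reduces the effective number of factors to a bounded quantity $m'$, whereupon the max-constituent step followed by [LT, Theorem~8.5] closes the argument.

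The hardest step will be balancing the parameters $(c,k,\epsilon,\gamma)$: one needs $\gamma\le\gamma(\epsilon,k)$ for Corollary~\ref{prod-k} to apply to blocks of $k$ characters, together with $c$ large enough that after the log-size loss from consolidation and the $\kappa$-loss per factor from the max-constituent step on the $m'$ surviving factors, the product of squared degrees still exceeds $|G|^{c_0}$. Because $\gamma(\epsilon,k)$ typically shrinks as $k$ grows, the parameter choice will likely involve a two-stage grouping (first consolidating many tiny characters to moderately-sized ones with small $k$, then reducing the count with larger $k$), yielding the explicit universal constant $c$.
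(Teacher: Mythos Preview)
Your opening move---pass to a largest-degree irreducible constituent $\chi_i^*$ of each $\chi_i$ and invoke \cite[Theorem~8.5]{LT}---is exactly what the paper does. The divergence is in how you bound $\chi_i^*(1)$ from below in terms of $|\chi_i|$. Your pigeonhole estimate $\chi_i^*(1)^2\ge |\chi_i|/k(G)$ loses a full factor of $k(G)$ per character, which forces you into the elaborate consolidation scheme via Corollary~\ref{prod-k}; you correctly flag the parameter balancing as the hardest step, but you do not actually carry it out, and it is not clear it can be completed as stated (since $\gamma(\epsilon,k)$ shrinks like $1/k$, a single round of blocking does not bring the number of factors down to an absolute constant, while iterating compounds the $(1-\epsilon)$ losses).

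The paper sidesteps all of this with one observation you are missing. For a finite simple group $G$ of Lie type, the minimal degree of a nontrivial irreducible character is at least $k(G)^{1/6}$, by combining the Landazuri--Seitz lower bounds \cite{LaSe} with the Fulman--Guralnick upper bound on $k(G)$ \cite{FG}. Hence if $m_i$ denotes the number of distinct irreducible constituents of $\chi_i$ (and $\chi_i$ is nontrivial, as we may assume), then $m_i\le k(G)\le \chi_i^*(1)^6$, so
\[
|\chi_i|\ \le\ m_i\,\chi_i^*(1)^2\ \le\ \chi_i^*(1)^{8},
\]
i.e.\ $\chi_i^*(1)\ge|\chi_i|^{1/8}$. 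The point is that $k(G)$ gets absorbed \emph{multiplicatively} into $\chi_i^*(1)$ rather than lost additively once per factor. This immediately gives
\[
\prod_{i=1}^m\chi_i^*(1)\ \ge\ \Bigl(\prod_{i=1}^m|\chi_i|\Bigr)^{1/8}\ \ge\ |G|^{c/8},
\]
with no dependence on $m$ whatsoever. Taking $c=8\epsilon$ (with $\epsilon$ the constant of \cite[Theorem~8.5]{LT}) finishes the proof in one line; Corollary~\ref{prod-k} is not needed here at all.
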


Finally, we prove an analogue of Theorem~\ref{main irred} for compact semisimple Lie groups.

\begin{thm}\label{compact}
Let $G$ be a compact semisimple Lie group.  Then there exists $\epsilon > 0$ such that for each irreducible character $\chi$ of $G$, we have $|\chi^2| \ge |\chi|^{1+\epsilon}$.
\end{thm}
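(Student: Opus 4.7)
By passing to the universal cover and splitting into simple factors (for a product $G_1\times G_2$ one has $\chi=\chi_1\boxtimes\chi_2$, $|\chi|=|\chi_1||\chi_2|$, $|\chi^2|=|\chi_1^2||\chi_2^2|$), reduce to $G$ compact connected simple. Let $r$ be the rank and $d$ the number of positive roots, and fix a maximal torus $T$. The main reduction is Cauchy--Schwarz applied to the decomposition $\chi^2 = \sum_\mu m_\mu \chi_\mu$ of the tensor square of an irreducible $\chi = \chi_\lambda$ into distinct irreducibles: from $\chi(1)^2 = \sum_\mu m_\mu \chi_\mu(1)$ and Cauchy--Schwarz one gets
\[
   \chi(1)^4 \;\le\; \Bigl(\sum_\mu m_\mu^2\Bigr) \Bigl(\sum_\mu \chi_\mu(1)^2\Bigr) \;=\; \|\chi\|_4^4 \cdot |\chi^2|,
\]
where $\|\chi\|_4^4 := \int_G|\chi|^4\,dg = \sum_\mu m_\mu^2$ by Plancherel applied to the orthonormal expansion of $\chi^2$. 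Since $|\chi|=\chi(1)^2$, this gives $|\chi^2|\ge |\chi|^2/\|\chi\|_4^4$, and the theorem reduces to a uniform $L^4$-estimate $\|\chi_\lambda\|_4^4 \le C\,|\chi_\lambda|^{1-c}$ with $C,c > 0$ depending only on $G$; any $\epsilon < c$ then works for $|\chi_\lambda|$ sufficiently large, and the finitely many small exceptions are absorbed by shrinking $\epsilon$, using $|\chi^2|\ge 1$.

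The $L^4$-estimate is established via the Weyl integration and character formulas. Writing $\chi_\lambda(\exp H) = A_{\lambda+\rho}(H)/A_\rho(H)$ with $A_\mu = \sum_{w\in W}\mathrm{sgn}(w)e^{\langle w\mu,\,\cdot\,\rangle}$ and $D := A_\rho$ the Weyl denominator, Weyl integration yields
\[
  \|\chi_\lambda\|_4^4 \;=\; \frac{1}{|W|} \int_T \frac{|A_{\lambda+\rho}(t)|^4}{|D(t)|^2}\,dt.
\]
The peak of $|\chi_\lambda|$ is at the identity, where $|\chi_\lambda|=\dim V_\lambda$; it decays on the scale $\|\lambda\|^{-1}$ in the Cartan subalgebra $\mathfrak{t}$, and on the complement of a small neighborhood of $1$ the Weyl character formula gives the uniform bound $|\chi_\lambda(t)|\le|W|/|D(t)|$. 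A rescaling $H\mapsto H/\|\lambda\|$ on $\mathfrak{t}$ converts the dominant contribution into a universal integral (independent of $\lambda$) times $(\dim V_\lambda)^4\cdot\|\lambda\|^{-\dim G}$. Using $\dim G = 2d+r$ together with $\dim V_\lambda \asymp \|\lambda\|^d$ for $\lambda$ deep in the dominant chamber (Weyl's dimension formula), this gives
\[
  \|\chi_\lambda\|_4^4 \;\le\; C\,(\dim V_\lambda)^{2-r/d} \;=\; C\,|\chi_\lambda|^{1-r/(2d)},
\]
hence $|\chi^2| \ge C^{-1}|\chi|^{1+r/(2d)}$ and so $\epsilon = r/(2d)$ works in this regime.

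The main obstacle is to make the $L^4$-estimate uniform over all dominant $\lambda$, including those on or near the walls of the Weyl chamber, where $\dim V_\lambda$ grows with strictly smaller polynomial degree than $d$ and the naive exponent $r/d$ is not directly available. One handles this by partitioning the dominant chamber into finitely many sectors according to which walls $\lambda$ is close to, running the rescaling argument on each sector with the effective rank and number of positive roots of the corresponding Levi subalgebra, and taking the infimum of the resulting positive exponents $c_i$ over the finitely many sectors to obtain the uniform constant $c > 0$. The positivity of this infimum is the technical heart of the proof and yields a single $\epsilon = c/2 > 0$ that works for every irreducible character of $G$.
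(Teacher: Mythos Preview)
Your approach via Cauchy--Schwarz and an $L^4$ estimate is natural and parallels what the paper does for finite groups of Lie type, but it is \emph{not} the route the paper takes here, and as written your argument has real gaps.

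The most serious issue is the $L^4$ bound. Your dichotomy ``near the identity'' versus ``away from the identity'' is too coarse: the bound $|\chi_\lambda(t)|\le |W|/|D(t)|$ is useless near the entire singular set of $T$ (where some root satisfies $e^{\alpha(t)}=1$), because $\int_T |D|^{-2}$ diverges. At a singular element $t_0$ with centralizer Levi $L$, the value $\chi_\lambda(t_0)$ can be as large as a polynomial of degree $|\Phi_L^+|$ in $\|\lambda\|$, so each singular stratum contributes its own peak that must be analyzed separately. Your ``sectors according to walls'' discussion refers to walls in $\lambda$-space, not to this stratification of $T$; neither issue is actually resolved. A second, smaller gap: absorbing the finitely many small $\lambda$ by ``$|\chi^2|\ge 1$'' is not enough, since for $|\chi|>1$ you need $|\chi^2|>|\chi|$ strictly; this requires a separate argument (the paper proves it as Proposition~\ref{compact-growth}).

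The paper avoids all of this analysis by a direct, elementary construction. Writing $\lambda=\sum a_i\varpi_i$ and choosing $i$ with $a_i$ maximal, it uses Brauer's criterion to show that $\chi_\lambda^2$ contains the $a_i+1$ distinct irreducibles with highest weights $2\lambda-k\alpha_i$ for $0\le k\le a_i$, and the Weyl dimension formula to show each has dimension at least $(1-k/a_i)^{|\Phi^+|}\chi_\lambda(1)$. Summing squares gives $|\chi_\lambda^2|\gtrsim |\chi_\lambda|\cdot a_i$, while $|\chi_\lambda|=O(a_i^{|\Phi|})$; hence any $\epsilon<1/|\Phi|$ works for large $\lambda$, and Proposition~\ref{compact-growth} handles the finitely many remaining $\lambda$. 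Your analytic route, if carried through, would in principle yield the sharper exponent $r/|\Phi|$ for $\lambda$ deep in the chamber, but making it uniform requires substantially more work than you have indicated.
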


It is possible that some of the presented results also hold for alternating groups (and likewise, Theorem \ref{main high rank} may hold
for Lie-type groups of bounded rank as well). However, the techniques developed in the paper do not seem to apply to these open cases.

\smallskip
Some words on the structure of this paper. In Section 2 we prove some preliminary results for finite groups.
Section 3 is devoted to the proofs of Theorem~\ref{main irred} and Theorem~\ref{main high rank}.
In Section 4 we prove Theorems~\ref{prod}, ~\ref{char-growth}, ~\ref{cover}, and Corollary ~\ref{prod-k},
while the proof of Theorem~\ref{N=6} is carried out in Section 5.
In Section 6 we study tensor product growth of representations of semisimple compact Lie groups and prove Theorem~\ref{compact}.

\section{Preliminaries}
We begin with general inequalities for finite groups $G$.

\begin{lem}\label{sub-add-mult}
Let $\alpha$, $\beta$ be any (not necessarily irreducible) characters of $G$. Then
\begin{enumerate}[\rm(a)]
\item $|\alpha+\beta| \leq |\alpha|+|\beta|$.
\item $|\alpha\beta| \leq |\alpha| \cdot |\beta|$.
\end{enumerate}
\end{lem}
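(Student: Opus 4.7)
The plan is to prove both parts by simple manipulations at the level of the sets $X_\chi$ of distinct irreducible constituents, together with one elementary degree inequality.

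For part (a), I would observe that the set of irreducible constituents of a sum of characters is exactly the union of the individual constituent sets: $X_{\alpha+\beta} = X_\alpha \cup X_\beta$. Then
\[
|\alpha+\beta| = \sum_{\psi \in X_\alpha \cup X_\beta} \psi(1)^2 \le \sum_{\psi \in X_\alpha} \psi(1)^2 + \sum_{\psi \in X_\beta} \psi(1)^2 = |\alpha| + |\beta|,
\]
with the inequality coming from the fact that elements of $X_\alpha \cap X_\beta$ may be counted twice on the right.

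For part (b), I first establish an auxiliary inequality: for any character $\tau$ of $G$,
\[
|\tau| \le \tau(1)^2.
\]
Writing $\tau = \sum_{\psi \in \Irr(G)} m_\psi \psi$ with $m_\psi \in \Z_{\ge 0}$, one has $\tau(1)^2 = \bigl(\sum_\psi m_\psi \psi(1)\bigr)^2 \ge \sum_\psi m_\psi^2 \psi(1)^2 \ge \sum_{\psi \in X_\tau} \psi(1)^2 = |\tau|$, where the first step drops the nonnegative cross terms and the second uses $m_\psi \ge 1$ on the support.

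Next, writing $X_\alpha = \{\chi_1,\ldots,\chi_k\}$ and $X_\beta = \{\eta_1,\ldots,\eta_\ell\}$, every irreducible constituent of $\alpha\beta$ is a constituent of some $\chi_i\eta_j$, so $X_{\alpha\beta} \subseteq \bigcup_{i,j} X_{\chi_i\eta_j}$. Therefore
\[
|\alpha\beta| \le \sum_{i,j} |\chi_i \eta_j| \le \sum_{i,j} (\chi_i\eta_j)(1)^2 = \Bigl(\sum_i \chi_i(1)^2\Bigr)\Bigl(\sum_j \eta_j(1)^2\Bigr) = |\alpha|\cdot|\beta|,
\]
where the second inequality uses the auxiliary bound applied to each $\tau = \chi_i\eta_j$.

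There is no real obstacle here; the only step requiring even a moment's thought is the auxiliary bound $|\tau| \le \tau(1)^2$, which is why I would state and prove it explicitly before combining the ingredients.
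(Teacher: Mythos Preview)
Your proof is correct and follows essentially the same approach as the paper's: both reduce to the multiplicity-free case (equivalently, work with the sets $X_\alpha,X_\beta$), use the union bound for part~(a), and for part~(b) bound each $|\chi_i\eta_j|$ by $(\chi_i(1)\eta_j(1))^2$ before summing. The only cosmetic difference is that you isolate the inequality $|\tau|\le\tau(1)^2$ as an explicit auxiliary step, whereas the paper simply writes $|\alpha_i\beta_j|\le(\alpha_i\beta_j(1))^2$ without further comment.
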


\begin{proof}
Without any loss of generality we may assume that $\alpha = \sum_i\alpha_i$ and $\beta=\sum_j \beta_j$
are mutliplicity-free, with $\alpha_i,\beta_j \in \Irr(G)$.
Then
$$|\alpha+\beta| = |\sum_i\alpha_i+\sum_j\beta_j| \leq \sum_i\alpha_i(1)^2+\sum_j\beta_j(1)^2 = |\alpha|+|\beta|,$$
proving (a).
Next, $|\alpha_i\beta_j| \leq \bigl(\alpha_i\beta_j(1)\bigr)^2 = \alpha_i(1)^2\beta_j(1)^2$. It follows from
(a) that
$$|\alpha\beta| = |\sum_{i,j}\alpha_i\beta_j| \leq \sum_{i,j}|\alpha_i\beta_j| \leq \sum_{i,j}\alpha_i(1)^2\beta_j(1)^2
   = \bigl(\sum_i\alpha_i(1)^2 \bigr) \cdot \bigl(\sum_j \beta_j(1)^2 \bigr) = |\alpha| \cdot |\beta|,$$
proving (b).
\end{proof}

\begin{prop}\label{lem:bound1}
Let $\chi_1, \ldots,\chi_n$ be irreducible characters of $G$, $n\ge 2$.
\begin{enumerate}[\rm(i)]
\item In general, we have
$$|\chi_1 \ldots \chi_n| \geq \max_{1 \leq i \leq n} |\chi_i|.$$
\item If $G$ is quasisimple and $\chi_i \neq 1_G$ for $1 \leq i \leq n$, then
$$|\chi_1 \ldots \chi_n| > \max_{1\leq i\leq n}|\chi_i|.$$
\item If $G$ is perfect, $\chi_1\neq 1_G$, and $\chi_i\in \{\chi_1,\bar\chi_1\}$ for all $i$, then $|\chi_1\cdots\chi_n| > |\chi_1| = \cdots = |\chi_n|$.
\end{enumerate}
\end{prop}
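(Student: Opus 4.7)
The plan is to apply Cauchy-Schwarz to $\alpha:=\chi_1\cdots\chi_n$, and then to isolate the equality cases in (ii) and (iii), ruling them out via the structure of (quasi)simple groups combined with the Howlett-Isaacs solvability theorem for groups of central type. Write $\alpha=\sum_{\eta\in X_\alpha}m_\eta\eta$. Cauchy-Schwarz yields
\[
\alpha(1)^2=\bigl(\sum_\eta m_\eta\eta(1)\bigr)^2\le\bigl(\sum_\eta m_\eta^2\bigr)\bigl(\sum_\eta\eta(1)^2\bigr)=\langle\alpha,\alpha\rangle\cdot|\alpha|.
\]
For any fixed $i$, the standard bounds $|\chi_j(g)|\le\chi_j(1)$ and $\langle\chi_i,\chi_i\rangle=1$ give
\[
\langle\alpha,\alpha\rangle=\frac1{|G|}\sum_g\prod_j|\chi_j(g)|^2\le\prod_{j\ne i}\chi_j(1)^2,
\]
so $|\alpha|\ge\alpha(1)^2/\langle\alpha,\alpha\rangle\ge\chi_i(1)^2=|\chi_i|$, and maximising over $i$ proves (i).

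For (ii) and (iii), strict inequality will follow once I show the bound on $\langle\alpha,\alpha\rangle$ is strict. Equality in that bound for the chosen $i$ is equivalent to $|\chi_j(g)|=\chi_j(1)$, i.e.\ $g\in Z(\chi_j)$, for every $j\ne i$ and every $g$ with $\chi_i(g)\ne 0$; in particular $\chi_i$ vanishes outside $H:=\bigcap_{j\ne i}Z(\chi_j)$. For (ii), $G$ is quasisimple with $\chi_j\ne 1_G$; since $Z(\chi_j)$ is a normal subgroup of $G$ containing $Z(G)$ and the normal subgroups of a quasisimple group are either contained in $Z(G)$ or equal $G$, and $Z(\chi_j)=G$ would make $\chi_j$ linear and hence trivial, we conclude $Z(\chi_j)=Z(G)$, so $H=Z(G)$. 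For (iii), the same reasoning (now with all $\chi_j\in\{\chi_1,\bar\chi_1\}$) gives $H=Z(\chi_1)$.

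Taking $i$ to achieve $\max_k|\chi_k|$ and noting $Z(\chi_i)=H$ in both situations, the equality case forces $\chi_i$ to vanish outside $Z(\chi_i)$; by the standard inequality $\chi_i(1)^2\le[G:Z(\chi_i)]$, this yields $\chi_i(1)^2=[G:Z(\chi_i)]$. The Howlett-Isaacs theorem then forces $G/Z(\chi_i)$ to be solvable. In (ii) this contradicts $G/Z(G)=G/Z(\chi_i)$ being non-abelian simple. In (iii) it forces the perfect quotient $G/Z(\chi_1)$ to be simultaneously solvable, hence trivial, so $G=Z(\chi_1)$ and $\chi_1$ is linear, contradicting $\chi_1\ne 1_G$ (as $G$ is perfect).

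The main obstacle is recognising that the equality configuration extracted from Cauchy-Schwarz is precisely the Howlett-Isaacs hypothesis; the rest then follows from elementary observations about normal subgroups of (quasi)simple and perfect groups.
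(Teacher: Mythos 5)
Your proof is correct and takes essentially the same route as the paper's: the same estimate $\langle\alpha,\alpha\rangle\le\prod_{j\ne i}\chi_j(1)^2$ combined with Cauchy--Schwarz for part (i), and the same extraction of the equality case (forcing $\chi_i$ to vanish off its centre and hence to be of central type) followed by the Howlett--Isaacs theorem for parts (ii) and (iii). The only cosmetic difference is that the paper phrases Howlett--Isaacs as giving solvability of $G/\Ker(\chi_i)$ rather than $G/Z(\chi_i)$; both immediately yield the contradictions you identify.
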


\begin{proof}
(i) Without any loss of generality, assume $\chi_1(1) = \max_i\chi_i(1)$. Note that
\begin{equation*}
\begin{split}
\langle \chi_1 \ldots \chi_n,\chi_1 \ldots \chi_n\rangle &=\frac 1{|G|} \sum_{g\in G} |\chi_1(g) \ldots \chi_n(g)|^2  \\
&\le\frac 1{|G|} \max_{h\in G} |\chi_2(h) \ldots \chi_n(h)|^2 \sum_{g\in G} |\chi_1(g)|^2 \\
&= \max_{h\in G} |\chi_2(h) \ldots \chi_n(h)|^2 \langle \chi_1,\chi_1\rangle \\
&= \chi_2(1)^2 \ldots \chi_n(1)^2,
\end{split}
\end{equation*}
with equality only if, for every $g \in G$,
\begin{equation}\label{eq:bd1}
  \mbox{either }\chi_1(g)=0 \mbox{ or  }|\chi_2(g)\ldots \chi_n(g)| = \chi_2(1) \ldots \chi_n(1).
\end{equation}

If $d_1,d_2,\ldots,d_k$ are the degrees of the distinct irreducible constituents of $\chi_1 \ldots \chi_n$ and $m_1,\ldots,m_k$ their multiplicities in $\chi_1 \ldots \chi_n$,
then
$$\sum_i m_i^2 = \langle \chi_1 \ldots \chi_n,\chi_1 \ldots \chi_n\rangle \le \chi_2(1)^2 \ldots\chi_n(1)^2$$ and
$$\sum_i d_i m_i = \chi_1(1) \ldots \chi_n(1).$$
So by the Cauchy-Schwarz inequality,
$$|\chi_1 \ldots \chi_n| = \sum^k_{i=1} d_i^2 \ge \frac{\chi_1(1)^2 \ldots \chi_n(1)^2}{\sum^k_{i=1} m_i^2} \ge \chi_1(1)^2 = |\chi_1|,$$
as stated.

\smallskip
(ii) Suppose $G$ is quasisimple, $\chi_i \neq 1_G$ for all $i$, but $|\chi_1 \ldots \chi_n|=|\chi_1|=\max_i|\chi_i|$. Note that $K_i:= \{ g \in G \mid |\chi_i(g)| = \chi_i(1)\}$ is a normal subgroup of $G$, which is contained in $\ZB(G)$. Setting $K:= \cap^n_{i=2}K_i$ and
using \eqref{eq:bd1}, we see that $\chi_1(g)=0$ for all
$g \notin K$. As $K \leq \ZB(G)$, we have $|\chi_1(g)| = \chi_1(1)$ for $g \in K$.
It follows that
$$|G| = \sum_{g \in G}|\chi_1(g)|^2 = |K|\chi_1(1)^2,$$
and thus $\chi_1(1) = |G/K|^{1/2}$, i.e. $\chi_1$ is of central type character for $G/\Ker(\chi)$.
By the Howlett-Isaacs
theorem \cite{HI}, $G/\Ker(\chi_1)$ is solvable, which is impossible, since it maps onto the simple group $G/\ZB(G)$.

\smallskip
(iii) Suppose $G$ is perfect and $\chi_1 \neq 1$, but $|\chi_1^{n-j} \bar\chi_1^j|=|\chi_1|$ for some $0 \leq j \leq n$. We again have
that $K:= \{ g \in G \mid |\chi_1(g)| = \chi_1(1)\}$ is a normal subgroup of $G$, and $\chi_1(g)=0$ for all
$g \notin K$ by \eqref{eq:bd1}. It follows that
$$|G| = \sum_{g \in G}|\chi_1(g)|^2 = |K|\chi_1(1)^2,$$
and thus $\chi_1(1) = |G/K|^{1/2}$, i.e. $\chi_1$ is of central type character for $G/\Ker(\chi_1)$.
By the Howlett-Isaacs
theorem, $G/\Ker(\chi_1)$ is solvable, so $\Ker(\chi_1) \geq [G,G]=G$ and thus $\chi_1=1_G$, a contradiction.
\end{proof}


The following example shows the absence of growth in $G$ in a class of groups $G$ which are far from being simple.

\begin{exam}
Let $p$ be any prime, $n \geq 1$ any integer, and $G$ be an extraspecial $p$-group of order $p^{1+2n}$. Then $G$ has $p^{2n}$ linear
characters and $p-1$
irreducible characters of degree $p^n$; let $\chi$ be one of the latter. Now, if $k \geq 1$ be any integer, then
$\chi^k$ is the sum of $p^{2n}$ linear characters if $p|k$, and a multiple of a single irreducible character of degree $p^n$ if $p \nmid k$.
Thus we always have $|\chi^k| = p^{2n} = |\chi| = |G|^{1-1/(2n+1)}$.
\end{exam}

\begin{question}
When $\chi_i$ are not assumed irreducible, can it ever happen that
$$|\chi_1\cdots\chi_n| < \max(|\chi_1|,\ldots,|\chi_n|)?$$
\end{question}

\section{Proofs of Theorems \ref{main irred} and \ref{main high rank}}

We begin this section by proving Theorem~\ref{main high rank} in the case that $\chi$ is irreducible, which is also the high rank case of Theorem~\ref{main irred}.

\begin{prop}
\label{high rank irred}
For all $\delta > 0$, there exist $\epsilon > 0$ and $R>0$ such that if $G$ is a finite quasisimple group of Lie type and rank $\ge R$, and $\chi\in\Irr(G)$ satisfies $|\chi|\le |G|^{1-\delta}$,
then $|\chi^2| \ge |\chi|^{1+\epsilon}$ and $|\chi\bar\chi|\ge |\chi|^{1+\epsilon}$.

\end{prop}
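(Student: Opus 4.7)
The plan is to reduce both desired inequalities to a single fourth-moment estimate on $|\chi|$ and then to prove that estimate using the uniform character-ratio bound of \cite[Theorem A]{LT}, whose absolute constant $c$ is the one appearing in the formula $\epsilon = c\delta/(4+2c(1-\delta))$ advertised after Theorem~\ref{main irred}.

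First, I would set up the Cauchy--Schwarz reduction. Writing $\chi^2 = \sum_i m_i \psi_i$ with distinct $\psi_i \in \Irr(G)$, one has
$$\chi(1)^2 = \sum_i m_i \psi_i(1), \qquad \langle\chi^2,\chi^2\rangle = \sum_i m_i^2, \qquad |\chi^2| = \sum_i \psi_i(1)^2,$$
so Cauchy--Schwarz applied to the vectors $(m_i)$ and $(\psi_i(1))$ gives
$$|\chi^2| \;\ge\; \frac{\chi(1)^4}{\langle\chi^2,\chi^2\rangle} \;=\; \frac{\chi(1)^4\cdot|G|}{\sum_{g\in G}|\chi(g)|^4}.$$
The parallel computation with $\chi\bar\chi$, using $(\chi\bar\chi)(1) = \chi(1)^2$ and the identity $\langle\chi\bar\chi,\chi\bar\chi\rangle = \frac{1}{|G|}\sum_{g\in G}|\chi(g)|^4 = \langle\chi^2,\chi^2\rangle$, yields the same lower bound for $|\chi\bar\chi|$. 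Both target inequalities therefore reduce to a single upper bound of the shape $\frac{1}{|G|}\sum_{g\in G}|\chi(g)|^4 \le \chi(1)^{2-2\epsilon}$.

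Next, I would feed this through the character-ratio input. The hypothesis $|\chi| = \chi(1)^2 \le |G|^{1-\delta}$ is equivalent to $\chi(1) \le |G|^{(1-\delta)/2}$, so in particular $\chi$ is non-trivial. Invoking \cite[Theorem A]{LT} produces, with an absolute constant $c > 0$ valid uniformly over all finite quasisimple groups of Lie type of rank at least some absolute $R$, a uniform $L^{2+c}$-type estimate on the character ratio $|\chi(g)|/\chi(1)$. Interpolating this high moment against the Parseval identity $\frac{1}{|G|}\sum_g|\chi(g)|^2 = 1$ by H\"older's inequality gives a bound of the form $\frac{1}{|G|}\sum_g|\chi(g)|^4 \le \chi(1)^{2}\cdot(|G|/\chi(1)^{2})^{-\theta}$ for an explicit $\theta=\theta(c)>0$. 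Substituting $\chi(1)^{2}/|G| \le |G|^{-\delta}$ and performing a short optimization then delivers the announced value $\epsilon = c\delta/(4+2c(1-\delta))$, which combined with the Cauchy--Schwarz step yields both $|\chi^2| \ge |\chi|^{1+\epsilon}$ and $|\chi\bar\chi|\ge|\chi|^{1+\epsilon}$.

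The hard part will be Step two: extracting exactly the right form of $L^{2+c}$ estimate on $|\chi|/\chi(1)$ from \cite[Theorem A]{LT} and keeping the constant $c$ absolute across all quasisimple groups of Lie type of rank $\ge R$. The rank assumption is precisely what allows this uniformity; the bounded-rank irreducible case of Theorem~\ref{main irred} must be treated by a separate and more elementary argument, which in fact delivers the stronger exponent $\epsilon = \delta/(2-2\delta)$ promised in the remark following Theorem~\ref{main irred}.
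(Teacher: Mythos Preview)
Your Cauchy--Schwarz reduction in Step one is exactly what the paper does, and the observation that $\langle\chi^2,\chi^2\rangle = \langle\chi\bar\chi,\chi\bar\chi\rangle$ handles both inequalities at once is correct.

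Step two, however, has a real gap. You describe \cite[Theorem A]{LT} as providing ``a uniform $L^{2+c}$-type estimate on the character ratio'' to be interpolated against Parseval via H\"older. That is not what the theorem says: its statement is the \emph{pointwise} bound
\[
|\chi(g)| \le \chi(1)^{1-c\,\frac{\log|g^G|}{\log|G|}},
\]
which is only nontrivial when $g$ lies in a large conjugacy class. No moment inequality or H\"older interpolation alone will turn this into a fourth-moment bound, because elements in tiny conjugacy classes (in particular central elements) have $|\chi(g)|=\chi(1)$ and the bound above gives nothing there.

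What the paper actually does---and what your outline is missing---is a splitting argument together with a second input you do not mention: the Fulman--Guralnick bound $k(G)\le 27.2\,q^r$. One fixes a threshold $\alpha$, sets $S=\{g: |g^G|\le |G|^{\alpha/2}\}$, and uses Fulman--Guralnick (plus the rank hypothesis) to get $|S|\le |G|^\alpha$. The sum $\frac{1}{|G|}\sum_g|\chi(g)|^4$ is then split as
\[
\frac{|S|}{|G|}\chi(1)^4 \;+\; \max_{g\notin S}|\chi(g)|^2 \cdot \frac{1}{|G|}\sum_{g}|\chi(g)|^2
\;\le\; |G|^{\alpha-1}\chi(1)^4 + \chi(1)^{2-c\alpha},
\]
the second term using the \cite{LT} bound on $G\setminus S$. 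Optimizing $\alpha = \frac{2\delta}{2+c(1-\delta)}$ gives $\epsilon = c\alpha/4 = \frac{c\delta}{4+2c(1-\delta)}$, the constant you quoted. The rank hypothesis enters precisely to make $|S|\le |G|^\alpha$ hold, not to make the constant $c$ in \cite{LT} uniform (that constant is already absolute). Your sketch needs both the splitting and the Fulman--Guralnick ingredient to go through.
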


\begin{proof}
Let $r\ge R$ be the rank of $G$ and $q$ the cardinality of the field of definition (for Suzuki and Ree groups, $q$ is defined, as usual, so that $q^2$ is an odd power of $2$ or $3$.)
By the main theorem of \cite{LT}, there exists $c>0$ such that
$$|\chi(g)| \le \chi(1)^{1-c\frac{\log |g^G|}{\log |G|}}.$$
Given $\alpha > 0$, let $S=S_\alpha\subset G$ denote the set of elements such that $|g^G|\le |G|^{\alpha / 2}$.  Then, by a well known result of Fulman and Guralnick \cite{FG}, we have
$$|S| \le k(G) |G|^{\alpha / 2} \le 27.2 q^r|G|^{\alpha / 2} \le |G|^\alpha,$$
if $R$ is taken sufficiently large in terms of $\alpha$.
Thus,
\begin{align*}
\langle \chi^2,\chi^2\rangle = \langle \chi\bar\chi,\chi\bar\chi\rangle &=\frac 1{|G|} \sum_{g\in G} |\chi(g)|^4  \\
&=\frac 1{|G|}\sum_{g\in S} |\chi(g)|^4 +\frac 1{|G|}\sum_{g\in G\setminus S} |\chi(g)|^4 \\
&\le \frac{|S|}{|G|} \chi(1)^4 + \frac 1{|G|}\max_{g\in G\setminus S} |\chi(g)|^2 \sum_{g\in G\setminus S} |\chi(g)|^2\\
&\le |G|^{\alpha-1} \chi(1)^4 + \frac 1{|G|}\chi(1)^{2(1-c\alpha/2)} \sum_{g\in G\setminus S} |\chi(g)|^2\\
&\le |G|^{\alpha-1}\chi(1)^4 + \chi(1)^{2-c\alpha}.
\end{align*}
By hypothesis, $|\chi| \le |G|^{1-\delta}$, so $\chi(1)\le |G|^{1/2-\delta/2}$, which implies
$$\langle \chi^2,\chi^2\rangle \le \chi(1)^{4 - \frac{2-2\alpha}{1-\delta}} + \chi(1)^{2-c\alpha} = (\chi(1)^{-\frac{2\delta-2\alpha}{1-\delta}}+\chi(1)^{-c\alpha})\chi(1)^2.$$
Choosing
$$\alpha := \frac{2\delta}{2+c(1-\delta)},\ \epsilon := \frac{c\alpha}4,$$
we get
$$\langle \chi^2,\chi^2\rangle \le 2\chi(1)^{-c\alpha} \chi(1)^2 \le \chi(1)^{2-2\epsilon}$$
if $R$ is taken sufficiently large.
Applying the Cauchy-Schwarz inequality, we get
$$|\chi^2| \ge \frac{\chi(1)^4}{\langle \chi^2,\chi^2\rangle} \ge \chi(1)^{2+2\epsilon} = |\chi|^{1+\epsilon}.$$
Likewise,
$$|\chi\bar\chi| \ge \frac{\chi(1)^4}{\langle \chi\bar\chi,\chi\bar\chi\rangle} \ge \chi(1)^{2+2\epsilon} = |\chi|^{1+\epsilon}.$$
\end{proof}

To complete the proof of Theorem~\ref{main irred}, we need only treat the bounded rank case. Using Proposition~\ref{lem:bound1}(iii),
we may take $|G|$ to be as large as we wish.

\begin{prop}
Let $R$ be fixed.  For all $\delta > 0$, there exists $\epsilon > 0$ such that if $G$ is a quasisimple group of Lie type of rank $r < R$ and $\chi$ is an irreducible character of $G$ with $|\chi| \le |G|^{1-\delta}$, then $|\chi^2|$ and $|\chi\bar\chi|$ are both at least $|\chi|^{1+\epsilon}$.
\end{prop}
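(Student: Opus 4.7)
The plan is to run the same Cauchy--Schwarz argument as in the high-rank case of Proposition~\ref{high rank irred}, but with the Fulman--Guralnick sifting bound on $|S|$ (which required $R$ large) replaced by a uniform pointwise character bound valid in bounded rank. First I would reduce to the case $|G|$ arbitrarily large. For any fixed $M$, there are only finitely many pairs $(G,\chi)$ with $G$ quasisimple of Lie type, rank less than $R$, $|G| \le M$, and $\chi \in \Irr(G)$ nontrivial; for each such pair Proposition~\ref{lem:bound1}(iii) gives $|\chi^2| > |\chi|$ and $|\chi\bar\chi| > |\chi|$ strictly, and since $|\chi|$ is a positive integer this forces $|\chi^2|, |\chi\bar\chi| \ge |\chi|^{1+\eta(G,\chi)}$ for some explicit $\eta(G,\chi) > 0$. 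The minimum over the finite collection yields a uniform positive constant handling the range $|G| \le M$; for $\chi = 1_G$ equality is trivial. Hence I may assume $|G| > M$ for $M$ to be chosen later.

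For $R$ fixed and $G$ of rank $r < R$, the smallest noncentral conjugacy class has size at least $|G|^{\theta_R}$ for some $\theta_R > 0$ depending only on $R$: the centralizer of any noncentral element has positive codimension in $G$ as an algebraic group, bounded below by a positive integer depending only on the Lie type, and taking the minimum over the finitely many types of rank less than $R$ and dividing by the maximal dimension yields $\theta_R > 0$. Inserted into \cite[Theorem A]{LT}, this produces
\begin{equation*}
|\chi(g)| \le \chi(1)^{1 - c_R}, \qquad g \notin \ZB(G),
\end{equation*}
with $c_R := c\theta_R > 0$ and $c$ the absolute LT constant. Splitting $\ZB(G)$ from its complement then gives
\begin{equation*}
\langle \chi^2, \chi^2 \rangle = \frac{1}{|G|} \sum_{g \in G} |\chi(g)|^4 \le \frac{|\ZB(G)|}{|G|} \chi(1)^4 + \chi(1)^{2-2c_R} \langle \chi, \chi \rangle = \frac{|\ZB(G)|}{|G|} \chi(1)^4 + \chi(1)^{2-2c_R},
\end{equation*}
with an identical bound for $\langle \chi\bar\chi, \chi\bar\chi \rangle$.

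To finish, set $\epsilon := \min\bigl(c_R/2,\ \delta/(2-2\delta)\bigr)$, a positive constant depending only on $\delta$ and $R$. The term $\chi(1)^{2-2c_R}$ is at most $\chi(1)^{2-2\epsilon}$ since $2c_R \ge 2\epsilon$. Using $\chi(1)^2 \le |G|^{1-\delta}$ and that $|\ZB(G)|$ is bounded in terms of $R$, the first term is at most $|\ZB(G)| \cdot |G|^{1-2\delta}$; in the extreme case $\chi(1) = |G|^{(1-\delta)/2}$ this must be compared with $\chi(1)^{2-2\epsilon} = |G|^{(1-\delta)(1-\epsilon)}$, and the exponent gap $(1-\delta)(1-\epsilon) - (1-2\delta) = \delta - \epsilon(1-\delta) \ge \delta/2 > 0$ ensures that the first term is at most $\tfrac12\chi(1)^{2-2\epsilon}$ once $|G|$ is sufficiently large. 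For smaller $\chi(1)$ the comparison is strictly easier. Applying Cauchy--Schwarz as in the high-rank proof then gives $|\chi^2| \ge \chi(1)^4/\langle\chi^2,\chi^2\rangle \ge \chi(1)^{2+2\epsilon} = |\chi|^{1+\epsilon}$, and the same for $|\chi\bar\chi|$.

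The main technical hurdle is the uniform pointwise bound $|\chi(g)| \le \chi(1)^{1-c_R}$ for all noncentral $g$, which rests on the positive lower bound $|g^G| \ge |G|^{\theta_R}$ available in bounded rank but failing in high rank (where $\theta_R \to 0$). In high rank one therefore must sift small classes using Fulman--Guralnick, whereas here the central-element term $|\ZB(G)|\chi(1)^4/|G|$ plays the analogous error role, and its sharper behavior under the constraint $\chi(1) \le |G|^{(1-\delta)/2}$ is precisely what delivers the improved exponent $\epsilon = \delta/(2-2\delta)$ advertised in the remarks following Theorem~\ref{main irred}.
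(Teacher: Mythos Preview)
Your argument is correct and follows the same skeleton as the paper's proof: reduce to $|G|$ large via Proposition~\ref{lem:bound1}(iii), obtain a uniform pointwise character-ratio bound away from a small exceptional set, split $\langle\chi^2,\chi^2\rangle$ accordingly, and finish with Cauchy--Schwarz. The one substantive difference is the source of the ratio bound. The paper quotes Gluck's estimate $|\chi(g)|/\chi(1) \le Cq^{-1/2}$ directly and takes $S=\{1\}$, whereas you derive $|\chi(g)|\le\chi(1)^{1-c_R}$ from \cite{LT} combined with the bounded-rank lower bound $|g^G|\ge|G|^{\theta_R}$ for noncentral $g$, and accordingly take $S=\ZB(G)$. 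Both routes yield the same final exponent $\epsilon=\delta/(2-2\delta)$ for small $\delta$; your choice has the side benefit that it reuses the same \cite{LT} input as the high-rank case and that splitting at $\ZB(G)$ handles a nontrivial center transparently.

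One small expositional slip in your finishing step: once you have replaced the central term by the upper bound $|\ZB(G)|\,|G|^{1-2\delta}$, a quantity independent of $\chi(1)$, you cannot then assert that ``for smaller $\chi(1)$ the comparison is strictly easier,'' since the target $\chi(1)^{2-2\epsilon}$ shrinks with $\chi(1)$. The clean fix is to keep the central term in the form $\tfrac{|\ZB(G)|}{|G|}\chi(1)^4$ and note that the desired inequality $\tfrac{|\ZB(G)|}{|G|}\chi(1)^4 \le \tfrac12\chi(1)^{2-2\epsilon}$ is equivalent to $\chi(1)^{2+2\epsilon}\le |G|/(2|\ZB(G)|)$, which is indeed tightest at the extremal value $\chi(1)=|G|^{(1-\delta)/2}$ --- precisely the case you verified.
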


\begin{proof}
We have already remarked that $G$ can be assumed arbitrarily large.  Since the rank $r$ is bounded, this means we may take $q$ arbitrarily large.
By an estimate of Gluck \cite{Gl}, for $g\neq 1$ we have $|\chi(g)| \le C/\sqrt q$, where $C$ is an absolute constant.
On the other hand, our upper bound on the rank gives an upper bound on $|G|$ of the form $q^N$ for some $N$.
Taking $q$ sufficiently large, we therefore have
$$\frac{|\chi(g)|}{\chi(1)} \le C q^{-1/2} \le q^{-1/3} \le |G|^{-\frac 1{3N}} \le \chi(1)^{-\frac 1{3N}}$$
for all $g\neq 1$.

Setting $S = \{1\}$, we proceed as in the proof of Proposition~\ref{high rank irred}, obtaining
\begin{align*}
\langle \chi^2,\chi^2\rangle = \langle \chi\bar\chi,\chi\bar\chi\rangle &=\frac 1{|G|} \sum_{g\in G} |\chi(g)|^4  \\
&=\frac{\chi(1)^4}{|G|} +\frac 1{|G|}\sum_{g\neq 1} |\chi(g)|^4 \\
&\le \frac{\chi(1)^4}{|G|} + \frac 1{|G|}\max_{g\neq 1} |\chi(g)|^2 \sum_{g\neq 1} |\chi(g)|^2\\
&\le \chi(1)^{4-\frac 2{1-\delta}} + \chi(1)^{2(1-\frac 1{3N})}.
\end{align*}

Assuming $\delta \le \frac 1{3N+1}$, we have
$$\langle \chi^2,\chi^2\rangle \le 2\chi(1)^{2 - \frac{2\delta}{1-\delta}} \le \chi(1)^{2 - \frac\delta{1-\delta}}$$
if $|G|$, and therefore $\chi(1)$, is sufficiently large.   Taking $\epsilon := \frac\delta{2-2\delta}$ and using Cauchy-Schwarz as before, the proposition follows.
\end{proof}

Now we prove Theorem~\ref{main high rank}.

\begin{proof}
By \cite[Theorem~1.2]{LS}, for all $N$, there exists $R$ such that if $G$ has rank $r\ge R$, then
$$\sum_{\chi\in\Irr(G)} \chi(1)^{-1/N} \le \frac 32.$$
This implies that the number of irreducible characters of $G$ of degree $\le D$ is at most $D^{1/N}$.
Therefore, if $\chi$ is a character of $G$ with distinct irreducible factors $\chi_1,\ldots, \chi_k$ of degrees $d_1\le d_2\le\cdots\le d_k$, then
$$|\chi| = \sum_{i=1}^k d_i^2 \le d_k^{1/N} d_k^2,$$
so
$$|\chi_k| = d_k^2 \ge |\chi|^{\frac N{2N+1}}.$$
On the other hand, $|\chi^2| \ge |\chi_k^2|$ and $|\chi\bar\chi| \ge |\chi_k\bar\chi_k|$.
Theorem~\ref{main irred} now follows by applying Proposition~\ref{high rank irred} to $\chi_k$.
\end{proof}

\section{Proof of Theorems \ref{prod}, \ref{char-growth} and \ref{cover}}

The notion of the {\it level} $\cl(\chi)$ of an irreducible character $\chi \in \Irr(G)$ of a finite classical group $G$ was introduced in
\cite{GLT1} for groups of type $A$, and \cite[Definition 3.2]{GLT2} for other classical types. For brevity, we use
$\SL^\epsilon$ to denote $\SL$ when $\epsilon=+$ and $\SU$ when $\epsilon=-$.
Using the results of \cite{GLT1,GLT2},
we can prove the following bound (which is useful only when $L<\sqrt n$).

\begin{prop}\label{count}
Let $G = \SL^\epsilon_n(q)$ with $n \geq 7$, or $\Sp_{2n}(q)$, $\Omega_{2n+1}(q)$, $\Omega^{\pm}_{2n}(q)$ with $n \geq 6$,
and let $L \geq 1$ be any real number.
\begin{enumerate}[\rm(a)]
\item Suppose $G = \SL^\epsilon_n(q)$ and $L \leq n/6$. Then the number $N(L)$ of $\chi \in \Irr(G)$ with $\chi(1) \leq q^{nL}$ is at most $q^{12L^2}$.
\item Suppose $G = \Sp_{2n}(q)$ with $2 \nmid q$. Then the number $N(L)$ of $\chi \in \Irr(G)$ with $\chi(1) \leq q^{nL}$ is at most $q^{18L^2}$.
\item Suppose $G = \Sp_{2n}(q)$ with $2|q$, or $\Omega_{2n+1}(q)$ with $2 \nmid q$, or $\Omega^+_{2n}(q)$ for any $q$. Then the number $N(L)$ of $\chi \in \Irr(G)$ with $\chi(1) \leq q^{nL}$ is at most $q^{50L^2}$.
\item Suppose $G = \Omega^-_{2n}(q)$. Then the number $N(L)$ of $\chi \in \Irr(G)$ with $\chi(1) \leq q^{(n-1)L}$ is at most $q^{41L^2}$.
\end{enumerate}
\end{prop}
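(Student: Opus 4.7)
The plan is to combine two kinds of results from \cite{GLT1, GLT2}: a lower bound on the degree of a character in terms of its level, and an upper bound on the number of characters of given level. First, recall that for each of the four cases, \cite{GLT1, GLT2} provide an explicit function $f = f_G(\ell)$ such that every $\chi \in \Irr(G)$ of level $\cl(\chi) = \ell$ satisfies $\chi(1) \ge q^{f(\ell)}$. In type $A$ one has $f(\ell) \gtrsim \ell(n-\ell)$; for the symplectic and orthogonal groups in cases (b)--(d) one has $f(\ell) \gtrsim n\ell - a\ell^2$ with type-dependent correction terms (and an analogous bound in terms of $n-1$ in case (d), which is why $\Omega^-_{2n}$ is stated separately). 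Inverting this, the hypothesis $\chi(1) \le q^{nL}$ (resp.\ $q^{(n-1)L}$) forces $\cl(\chi) \le \ell_0$ for a computable $\ell_0 = \ell_0(L,n)$; the conditions $L \le n/6$ in (a) and $n \ge 6$ (or $n \ge 7$) in the other parts are precisely what guarantees the correction term is absorbed and yields a clean bound of the form $\ell_0 \le c_1 L$ with an explicit $c_1$.

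Next, invoke the level-by-level counting results of \cite{GLT1, GLT2}: the number $m_\ell$ of $\chi \in \Irr(G)$ with $\cl(\chi) = \ell$ is bounded above by $p(\ell) \cdot q^{a \ell^2}$ for an explicit constant $a$ depending on the type, where $p(\ell)$ is a combinatorial factor (e.g.\ a count of pairs of partitions of total size $\ell$) that is dominated by any positive power of $q$ once $q$ is allowed to be $\ge 2$. Summing,
\begin{equation*}
N(L) \;\le\; \sum_{\ell=0}^{\ell_0} m_\ell \;\le\; (\ell_0+1)\,p(\ell_0)\,q^{a \ell_0^2}.
\end{equation*}
Plugging in the numerical values of $a$ and $c_1$ coming from the specific form of the degree lower bound and the counting bound in each of the four cases yields the desired exponents $12L^2$, $18L^2$, $50L^2$, and $41L^2$ respectively.

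The main obstacle is bookkeeping rather than a new idea: one has to track the exact form of the degree lower bound (not merely its asymptotics) through the inversion step in each of the four cases, because the stated constants $12, 18, 50, 41$ depend sensitively on both the leading coefficient in $f(\ell)$ and on the constant $a$ in the level-by-level count. The split between cases (b) and (c) reflects the fact that, for $\Sp_{2n}(q)$ with $q$ odd, reducible Weil characters give a more favourable degree bound than in even characteristic or in the orthogonal cases, so one actually gets $a = 18$ in (b) but must settle for $a = 50$ in (c); the split between (c) and (d) is the $n$ versus $n-1$ issue already mentioned. A final minor check is that the combinatorial prefactor $p(\ell_0)$ together with the $(\ell_0+1)$ from the range of summation can be absorbed into $q^{a\ell_0^2}$ by a small enlargement of the exponent, which is harmless since the stated bounds already round up to integer constants.
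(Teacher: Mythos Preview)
Your outline is essentially the paper's approach: bound the level via the degree lower bounds of \cite{GLT1,GLT2}, then count characters up to that level. The one point worth flagging is the counting step. You invoke ``level-by-level counting results'' of the form $m_\ell \le p(\ell)\,q^{a\ell^2}$ as though they were black boxes in \cite{GLT1,GLT2}; the paper does not cite such results but instead bounds directly the number $N^*(l)$ of irreducible constituents of the $l^{\mathrm{th}}$ power of the relevant (reducible) Weil character by
\[
N^*(l) \;\le\; \langle (\text{Weil})^l,\ (\text{Weil})^l\rangle \;=\; \langle (\text{Weil})^{2l},\ 1_G\rangle,
\]
which is a count of $G$-orbits on $V^{2l}$ (or $V^l$ in the odd-characteristic symplectic case), and then applies the explicit orbit bounds of \cite[Lemma~2.4]{GLT1} and \cite[Lemma~2.6]{GLT2}. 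This avoids summing over levels and delivers the exponents $12,18,50,41$ directly. There are also some case-specific subtleties you would need to handle to make your sketch go through (e.g.\ in the $\SU$ case one must use $\zeta^l+\zeta^{l-1}$ since only $\zeta^2$ contains $1_G$, and in case (b) the identity $\omega^2=(\omega^*)^2$ is needed to reduce $(\omega+\omega^*)^l$ to two terms), but these are implementation details rather than a different idea.
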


\begin{proof}
(a) First we bound $j:=\cl(\chi)$ in terms of $L$. If $j > n/2$, then by \cite[Theorem 1.3(ii)]{GLT1} we have
$$\chi(1) \geq (2/3)q^{n^2/4-3} > q^{n^2/6}$$
since $n \geq 7$, contradicting the condition $L \leq n/6$. Hence $j \leq n/2$. Now using \cite[Theorem 1.3(i)]{GLT1} we get
$$q^{nL} \geq \chi(1) \geq \frac{1}{2(q+1)}q^{j(n-j)} > q^{j(n-j)-2.6} \geq q^{jn/2-2.6},$$
which implies
\begin{equation}\label{eq-ct1}
  j \leq \frac{nL+2.6}{n/2} = 2L + \frac{5.2}{n} < \frac{11}{4}L.
\end{equation}

Let $\eps=+$ and consider the reducible {\it Weil character} $\tau=\tau_{n,q}$ of $\SL_n(q)$, see \cite[(1.1)]{GLT1}. Note that $\tau$ is
just the permutation character of $G=\SL_n(q)$ acting on the point set of $V:=\F_q^n$; in particular $\tau$ contains $1_G$, and so
$\tau^j$ contains all irreducible constituents of $\sum^{j}_{i=0}\tau^i$.
By definition, $j=\cl(\chi)$ is the smallest non-negative integer such that $\chi$ is a constituent of $\tau^j$.
Hence, \eqref{eq-ct1} implies that
$N(L) \leq N^*(l)$ with $l:= \lfloor 11L/4 \rfloor$, where $N^*(l)$ is the number of distinct irreducible constituents of $\tau^l$.
Next,
$$N^*(l) \leq \langle \tau^l,\tau^l \rangle = \langle \tau^{2l},1_G \rangle,$$
the number of $G$-orbits on $V^{2l}$. By
\cite[Lemma 2.4]{GLT1}, the number of $\GL_n(q)$-orbits on $V^{2l}$ is at most $8q^{l^2}$. Since $G$ has index $q-1$ in
$\GL_n(q)$, we have
$$N(L) \leq 8(q-1)q^{l^2} \leq q^{l^2+3} \leq q^{(11L/4)^2+3} < q^{11L^2}.$$

Now suppose $\eps=-$ and consider the reducible {\it Weil character} $\zeta=\zeta_{n,q}$ of $\SU_n(q)$, see \cite[(1.1)]{GLT1}. Note that
$\zeta^2=\bigl(\tau_{n,q^2}\bigr)|_G$ is
just the permutation character of $G=\SU_n(q)$ acting on the point set of $U:=\F_{q^2}^n$; in particular $\zeta^2$ contains $1_G$, and so
$\zeta^j+\zeta^{j-1}$ contains all irreducible constituents of $\sum^{j}_{i=0}\zeta^i$. Hence, \eqref{eq-ct1} implies that
$N(L) \leq N^*(l)+N^*(l-1)$ with $l:= \lfloor 11L/4 \rfloor$, where $N^*(l)$ is the number of distinct irreducible constituents of $\zeta^l$.
Next,
$$N^*(l) \leq \langle \zeta^l,\zeta^l \rangle = \langle \zeta^{2l},1_G \rangle = \langle \bigl(\tau_{n,q^2}\bigr)|_G,1_G\rangle,$$
the number of $G$-orbits on $U^{2l}$. By
\cite[Lemma 2.4]{GLT1}, the number of $\mathrm{GU}_n(q)$-orbits on $U^{2l}$ is at most $2q^{l^2}$. Since $G$ has index $q+1$ in
$\mathrm{GU}_n(q)$, we have
$$N(L) \leq 4(q+1)q^{l^2} < q^{l^2+4} \leq q^{(11L/4)^2+4} < q^{12L^2}.$$

\smallskip
(b) In the remaining cases, we set $k := \lfloor (j+2)/3 \rfloor$ for $j:=\cl(\chi)$, so that
$$j/3 \leq k \leq (j+2)/3.$$
Consider the case of
$G = \Sp_{2n}(q)$ with $2 \nmid q$. By \cite[Lemma 3.4]{GLT2}, $j \leq 2n+1$, so
$$(k+1)/2 \leq (2n+6)/6 \leq n/2$$
when $n \geq 6$. Now, applying \cite[Theorem 1.5]{GLT2} we have
$$q^{nL} \geq \chi(1) \geq q^{k(n-(k+1)/2)} \geq q^{kn/2},$$
whence $k \leq 2L$ and so
\begin{equation}\label{eq-ct2}
  j \leq 3k \leq 6L.
\end{equation}
By definition, $\cl(\chi)= j$ means that $\chi$ is an irreducible constituent of $(\omega+\omega^*)^j$, where
$\omega$ and $\omega^*$ are the two reducible {\it Weil characters} of $\Sp_{2n}(q)$, see \cite[\S3]{GLT2}. Next,
$(\omega+\omega^*)^2$ always contains $\bigl(\tau_{2n,q}\bigr)|_G$, and hence $1_G$ as well, by \cite[Proposition 3.1]{GLT2}.
It follows that $(\omega+\omega^*)^j+(\omega+\omega^*)^{j-1}$ contains all irreducible constituents of
$\sum^{j}_{i=0}(\omega+\omega^*)^i$. Hence, \eqref{eq-ct2} implies that
$N(L) \leq N^*(l)+N^*(l-1)$ with $l:= \lfloor 6L \rfloor$, where $N^*(l)$ is the number of distinct irreducible constituents of
$(\omega+\omega^*)^l$. By \cite[Proposition 3.2]{GLT2},
$\omega^2=(\omega^*)^2$, so in fact any irreducible constituent of $(\omega+\omega^*)^l$ is an irreducible constituent of
$\omega^l$ or of $\omega^{l-1}\omega^*$. If $q \equiv 3\ (\bmod\ 4)$, then by \cite[Proposition 3.2]{GLT2} for $0 \leq i \leq l$ we have
$$\langle \omega^{l-i}(\omega^*)^i, \omega^{l-i}(\omega^*)^i\rangle = \langle \omega^{l-i}\bar\omega^i, \omega^{l-i}\bar\omega^i\rangle
   = \langle (\omega\bar\omega)^{2l},1_G \rangle = \langle (\tau_{2n,q}^l)|_G,1_G \rangle,$$
the number of $G$-orbits on $V^{l}$ with $V:=\F_q^{2n}$, which is at most $6q^{l(l-1)/2}$ by \cite[Lemma 2.6]{GLT2}. It follows
that $N^*(l) \leq 12q^{l(l-1)/2}$.

If $q \equiv 1\ (\bmod\ 4)$, then by \cite[Proposition 3.2]{GLT2} for $0 \leq i \leq l$ we have
$$\langle \omega^{l-i}(\omega^*)^i, \omega^{l-i}(\omega^*)^i\rangle
   = \langle (\omega)^{2l-2i}(\omega^*)^{2i},1_G\rangle = \langle (\tau_{2n,q}^l)|_G,1_G \rangle,$$
which is again the number of $G$-orbits on $V^{l}$ with $V:=\F_q^{2n}$ and so at most $6q^{l(l-1)/2}$ by \cite[Lemma 2.6]{GLT2}. Thus
we also have $N^*(l) \leq 12q^{l(l-1)/2}$ in this case.

Thus $N(L) \leq 12q^{l(l-1)/2}+12q^{(l-1)(l-2)/2} < q^{6L(6L-1)/2+3} < q^{18L^2}$.

\smallskip
(c) Now consider the case of
$G = \Sp_{2n}(q)$ with $2|q$, or $\Omega_{2n+1}(q)$ with $2 \nmid q$, or $\Omega^+_{2n}(q)$.
By \cite[Lemma 3.4]{GLT2}, $j \leq n+1$, so
$$k+1 \leq (n+6)/3 \leq 2n/3$$
when $n \geq 6$. Applying \cite[Theorem 1.5]{GLT2} we have
$$q^{nL} \geq \chi(1) \geq q^{2k(n-(k+1))} \geq q^{2kn/3},$$
whence $k \leq 3L/2$ and so
\begin{equation}\label{eq-ct3}
  j \leq 3k \leq 9L/2.
\end{equation}
By definition, $\cl(\chi)= j$ means that $\chi$ is an irreducible constituent of $\bigl((\tau+\zeta)|_G\bigr)^j$, where
$\tau$ and $\zeta$ are the reducible  Weil characters of $\SL_{D}(q)$ and $\SU_D(q)$, with $D=2n$ or $2n+1$,
the dimension of the natural module $V:=\F_q^D$ of $G$. In particular, $\tau_G$ contains $1_G$.
Hence, \eqref{eq-ct3} implies that
$N(L) \leq N^*(l)$ with $l:= \lfloor 9L/2 \rfloor$, where $N^*(l)$ is the number of distinct irreducible constituents of
$\bigl(\tau|_G+\zeta|_G)^l$. Since $(\tau|_G)^2=(\zeta|_G)^2$, any irreducible constituent of $\bigl(\tau|_G+\zeta|_G)^l$ is an irreducible constituent of  $(\tau|_G)^l$ or of $(\tau|_G)^{l-1}(\zeta|_G)$. Now, for $0 \leq i \leq l$ we have
$$\langle (\tau|_G)^{l-i}(\zeta|_G)^i, (\tau|_G)^{l-i}(\zeta|_G)^i\rangle =
   \langle (\tau|_G)^{2l-2i}(\zeta|_G)^{2i},1_G \rangle = \langle (\tau^{2l})|_G,1_G \rangle,$$
the number of $G$-orbits on $V^{2l}$. By \cite[Lemma 2.6]{GLT2}, the number of $\tilde G$-orbits on $V^{2l}$
is at most $6q^{l(2l+1)}$, for $\tilde{G} = \Sp_D(q)$ or $\mathrm{GO}_D(q)$, respectively. Note that $[\tilde{G}:G]$ is
$1$ in the $\Sp$-case, $2$ in the $\Omega$-case with $2 \nmid D$ or with $2|q$, and $4$ otherwise.
It follows that
$$N^*(l) \leq 12q^{l(2l+1)/2}\cdot[\tilde{G}:G] < q^{l(2l+1)+5} < q^{50L^2}.$$

\smallskip
(d) Finally, assume that $G = \Omega^-_{2n}(q)$. In this case, $j \leq  n+1$, so
$$k+1 \leq (n+3)/3 \leq 3(n-1)/5$$
when $n \geq 6$. Applying \cite[Theorem 1.5]{GLT2} we have
$$q^{(n-1)L} \geq \chi(1) \geq q^{2k(n-1-k)} \geq q^{4k(n-1)/5},$$
whence $k \leq 5L/4$ and so $j \leq 3k \leq 15L/4$. Now we can repeat the arguments in (c) to get
$N(l) \leq N^*(l) \leq q^{l(2l+1)+5} < q^{41L^2}$.
\end{proof}

We can now prove Theorem~\ref{prod} and Corollary \ref{prod-k}.

\begin{proof}[Proof of Theorem~\ref{prod}]
Let $\epsilon>0$ be given.  We claim there exists $\delta > 0$ such that
if $G$ is a finite quasisimple group of Lie type, and  $\chi_1$ and $\chi_2$ are (possibly reduced) characters of $G$
with $|\chi_1|, |\chi_2| \leq |G|^\delta$, then
$$|\chi_1\chi_2| \geq \bigl(|\chi_1|\cdot|\chi_2|\bigr)^{1-\epsilon}.$$

It suffices to prove the statement in the case $|\chi_1|, |\chi_2| > 1$; in particular, the $\chi_i$ contain nontrivial irreducible constituents
$\alpha_1$ and $\alpha_2$, respectively. We may also assume that each $\chi_i$ is multiplicity-free.
If $G$ is an exceptional group of Lie type, or a finite classical group of rank $\leq 7$,
then \cite{LaSe} implies that $\alpha_i(1) > |G|^{1/20}$. Taking $\delta \leq 1/10$, we may assume $G$ is a classical group of rank
$\geq 8$.  By \cite[Proposition~6.3]{GLT2}, if $G$ is of orthogonal type, the bounds on $|\chi_i|$ imply that they arise from characters
of $\Omega^+_{2n+1}(q)$, $\Omega^+_{2n}(q)$, or $\Omega^-_{2n}(q)$ by composition with a central quotient map.

We will view $\chi_1,\chi_2$ as characters of $\hat{G} = \SL^{\pm}_n(q)$, $\Sp_{2n}(q)$, $\Omega^+_{2n+1}(q)$,
$\Omega^+_{2n}(q)$, or $\Omega^-_{2n}(q)$, respectively, where $n \geq 8$. In all cases, $\hat{G}$ contains a subgroup
$H \cong \SL^{\pm}_m(q)$ with $m:=n$ in the first case, and $H \cong \SL_m(q)$ (inside a split Levi subgroup)
with $m:=n$, $n$, $n$, or $n-1$, respectively, in
the other four cases. One checks that  $q^{m^2/2} < |G| < q^{4m^2}$ in all cases; furthermore, $\alpha_i(1) > q^{m/2}$.
Writing
\begin{equation}\label{eq-pr1}
  q^{mL_i/2} \leq \chi_i(1)=q^{mD_i} \leq q^{mL_i}
\end{equation}
for some (real) numbers $L_i \geq 1$ and $L_i/2 \leq D_i \leq L_i$, we will choose $\delta$ so that
$$q^{mL_i} \leq |\chi_i| \leq |G|^\delta \leq q^{4m^2\delta};$$
in particular,
\begin{equation}\label{eq-pr2}
  L_i \leq 4m\delta.
\end{equation}
Writing
$$\chi_1\chi_2 = \sum_{\gamma \in \Irr(G)}m_\gamma \gamma,~\chi^2_1 = \sum_{\gamma \in \Irr(G)}a_\gamma \gamma,
   ~\chi^2_2 = \sum_{\gamma \in \Irr(G)}b_\gamma \gamma,$$
we have by the Cauchy-Schwarz inequality
$$\begin{aligned}
    \bigl(\sum_\gamma m_\gamma^2\bigr)^2 & = \langle \chi_1\chi_2,\chi_1\chi_2 \rangle^2\\
    & = \langle \chi_1\bar\chi_1,\chi_2\bar\chi_2\rangle^2\\
    & \leq \langle \chi_1\bar\chi_1,\chi_1\bar\chi_1 \rangle \cdot \langle \chi_2\bar\chi_2,\chi_2\bar\chi_2\rangle \\
    & = \langle \chi_1^2,\chi_1^2 \rangle \cdot \langle \chi_2^2,\chi_2^2\rangle \\
    & = \sum_\gamma a_\gamma^2 \cdot \sum_\gamma b_\gamma^2\\
    & \leq \bigl( \sum_\gamma a_\gamma\bigr)^2 \cdot \bigl(\sum_\gamma b_\gamma \bigr)^2,
    \end{aligned}$$
and thus
\begin{equation}\label{eq-pr3}
  \sum_\gamma m_\gamma^2 \leq \sum_\gamma a_\gamma \cdot \sum_\gamma b_\gamma = \bsig(\chi_1^2,G) \cdot \bsig(\chi_2^2,G),
\end{equation}
where, $\bsig(\cdot,\cdot)$, as defined in \cite[(2.1)]{GLT2}, denotes the sum of the multiplicities of all the irreducible constituents of the first argument, regarded as a character on the group specified in the second argument.

First we consider the case $\hat{G}=\SL^\pm_n(q)$.
Then each irreducible constituent $\theta$ of $\chi_i$ lies under some $\tilde\theta \in \Irr(\GL^\pm_n(q))$ of degree
less than $(q\mp 1)\theta(1)$. Hence $\chi_i$ lies under some character $\tilde\chi_i$ of $\GL^\pm_n(q)$ of degree
at most $q^{mL_i+2} \leq q^{5mL_i/4}$ since $m \geq 8$. Applying \cite[Corollary 5.2(i)]{GLT2} to $\tilde\chi_i$ and restricting further down
to $\SL^\pm_n(q)$, we have
$$\bsig(\chi_i^2,G) \leq q^{(125/4)L_i^2+2}\bsig(\chi_i,G)^2 < q^{46L^2}.$$
Here, we use the fact that $\chi_i$ is multiplicity-free, and so $\bsig(\chi_i,G)$ is at most the number of irreducible characters of
$G$ of degree $\leq q^{nL_i}$, which is at most $q^{12L_i^2}$ by Proposition \ref{count}(a).

Next suppose we are in the symplectic-orthogonal case. Taking $\delta > 0$ small enough, we may assume that
$L_i \leq n/9$. Applying Theorem 5.8 and Corollary 5.9 of \cite{GLT2} to $\chi$ and restricting further down
from a normal subgroup of $\GL_m(q)$ containing $H=\SL_m(q)$ to $H$ if necessary, we have
$$\bsig(\chi_i^2,G) \leq q^{A\sqrt{mL_i^3}+60L_i^2}\bsig(\chi_i,G)^2 \leq q^{A\sqrt{mL_i^3}+110L_i^2}$$
for some explicit $A$ (which can be taken to be $70$).
Here, we again use the fact that $\chi_i$ is multiplicity-free, and so $\bsig(\chi_i,G)$ is at most the number of irreducible characters of $G$ of degree $\leq q^{mL_i}$, which is at most $q^{50L_i^2}$ by Proposition \ref{count}(b)--(d).

Using \eqref{eq-pr3}, in either case we have
$$\sum_\gamma m_\gamma^2 \leq q^{A\sqrt{mL_1^3}+A\sqrt{mL_2^3}+BL_1^2+BL_2^2}$$
with $B=110$. It follows that
$$|\chi_1\chi_2| = \sum_{m_\gamma > 0}\gamma(1)^2 \geq \frac{(\sum_\gamma m_\gamma \gamma(1))^2}{\sum_{\gamma}m_\gamma^2}
   = \frac{\chi_1(1)^2\chi_2(1)^2}{\sum_{\gamma}m_\gamma^2}
   \geq q^{2m(D_1+D_2)-A\sqrt{mL_1^3}-A\sqrt{mL_2^3}-BL_1^2-BL_2^2}.$$
Recalling \eqref{eq-pr2}, we observe
$$\frac{\sqrt{mL_1^3}+\sqrt{mL_2^3}}{m(D_1+D_2)} \leq \frac{2mL_1\sqrt{\delta}+2mL_2\sqrt{\delta}}{m(L_1+L_2)/2} = 4\sqrt{\delta},$$
and
$$\frac{L_1^2+L_2^2}{m(D_1+D_2)} \leq \frac{2mL_1\delta+2mL_2\delta}{m(L_1)+L_2)/2} = 4\delta.$$
Hence
$$|\chi_1\chi_2| \geq q^{2m(D_1+D_2)(1-2A\sqrt{\delta}-2B\delta)}.$$
Note that $|\chi_1| \cdot |\chi_2| \leq \chi_1(1)^2\chi_2(1)^2 = q^{2m(D_1+D_2)}$. Hence,
taking $\delta > 0$ so that
$$2A\sqrt{\e}+2B\e \leq \epsilon,$$
we have
$|\chi_1\chi_2| \geq \bigl(|\chi_1|\cdot|\chi_2|\bigr)^{1-\epsilon}$, as desired.
\end{proof}

\begin{proof}[Proof of Corollary \ref{prod-k}]
We prove by induction on $k \geq 2$ the equivalent statement:\\
{\it For any $\epsilon > 0$ and any $k \geq 2$, there exists an explicit $\gamma > 0$ (depending on both $\epsilon$ and $k$) such that the following statement holds. If $G$ is a finite quasisimple group
of Lie type and $\chi_1,\chi_2, \ldots,\chi_k$ are any characters of $G$ with
$|\chi_1|, |\chi_2|, \ldots, |\chi_k| \leq |G|^\gamma$, then
$$|\chi_1\chi_2 \cdots \chi_k| \geq \bigl(|\chi_1|\cdot|\chi_2|\cdots |\chi_k|\bigr)^{1-k\epsilon}.$$}
We will show that this statement holds with $\gamma:=\delta/(k-1)$, where $\delta$ the constant in
Theorem \ref{prod}. The case $k=2$ already established in Theorem \ref{prod}. For the inductive step, note by Lemma \ref{sub-add-mult}
and the induction hypothesis that
$$\bigl(|\chi_2| \cdots |\chi_k|\bigr)^{1-(k-1)\epsilon} \leq |\chi_2 \ldots \chi_k| \leq \prod^k_{i=2}|\chi_i| \leq |G|^{\gamma(k-1)} \leq |G|^\delta.$$ Since $|\chi_1| \leq |G|^\delta$,
by Theorem \ref{prod} we have
$$|\chi_1\chi_2 \ldots \chi_k| \geq \bigl( |\chi_1|\cdot |\chi_2 \ldots \chi_k|\bigr)^{1-\epsilon} \geq
   \bigl( |\chi_1|\cdot (|\chi_2| \cdots |\chi_k|)^{1-(k-1)\epsilon}\bigr)^{1-\epsilon} \geq \bigl(|\chi_1|\cdot|\chi_2| \cdots |\chi_k|\bigr)^{1-k\epsilon}.$$
\end{proof}

The next result is required in our proof of Theorem~\ref{char-growth}.

\begin{theor}\label{product}
For any $\delta > 0$, there exists an explicit integer $N$ such that the following statement holds. If $G$ is a finite simple group
of Lie type, and $\chi_1, \ldots,\chi_N$ are any (not necessarily irreducible) characters of $G$ with $|\chi_i| \geq |G|^\e$ for
all $i$, then $|\chi_1\chi_2 \cdots \chi_N| = |G|$ and thus $\chi_1 \chi_2 \cdots \chi_N$ contains every irreducible character of $G$.
\end{theor}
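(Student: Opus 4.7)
The plan is to reduce Theorem~\ref{product} to the case of irreducible characters and then invoke its irreducible analogue \cite[Theorem~8.5]{LT}, which settles Gill's conjecture. That theorem provides an absolute constant $c>0$ such that for any irreducible characters $\beta_1,\ldots,\beta_N$ of a finite simple group of Lie type $G$ with $\prod_{i}\beta_i(1)^2\ge |G|^c$, the product $\beta_1\cdots \beta_N$ contains every irreducible character of $G$.

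Given $\delta>0$, the first step is to extract, from each $\chi_i$, a single irreducible constituent whose degree captures a definite fraction of $|\chi_i|$. For each $i$, let $\alpha_i$ be an irreducible constituent of $\chi_i$ of largest possible degree. Since $\chi_i$ is a non-negative integer combination of at most $k(G)$ distinct irreducibles, the inequality $|\chi_i| \ge |G|^\delta$ yields
$$ \alpha_i(1)^2 \;\ge\; \frac{|\chi_i|}{k(G)} \;\ge\; \frac{|G|^\delta}{k(G)}. $$
By the Fulman--Guralnick bound $k(G) \le 27.2\, q^r$ (already used in the proof of Proposition~\ref{high rank irred}), together with the fact that $|G|$ grows like $q^{\dim G}$ with $\dim G$ of order $r^2$ for high-rank classical groups and of bounded order with $q\to\infty$ otherwise, the ratio $\log k(G)/\log |G|$ can be made smaller than $\delta/2$ uniformly once either the rank is sufficiently large in terms of $\delta$ or, in the bounded-rank case, $q$ is sufficiently large. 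So $k(G) \le |G|^{\delta/2}$ and hence $|\alpha_i| = \alpha_i(1)^2 \ge |G|^{\delta/2}$.

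Now set $N := \lceil 2c/\delta \rceil$. Then $\prod_{i=1}^{N} \alpha_i(1)^2 \ge |G|^{N\delta/2} \ge |G|^c$, so by \cite[Theorem~8.5]{LT} the product $\alpha_1\alpha_2\cdots \alpha_N$ already contains every irreducible character of $G$. Since each $\alpha_i$ appears with positive multiplicity in $\chi_i$, expanding $\chi_1\chi_2\cdots\chi_N$ as the sum of products $\gamma_1\gamma_2\cdots\gamma_N$ over all choices of irreducible constituents $\gamma_i \subseteq \chi_i$ shows that $\chi_1\chi_2\cdots\chi_N - \alpha_1\alpha_2\cdots\alpha_N$ is a character of $G$. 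In particular the set of irreducible constituents of $\chi_1\cdots\chi_N$ contains that of $\alpha_1\cdots\alpha_N$, so it is all of $\Irr(G)$, giving $|\chi_1\chi_2\cdots\chi_N| = |G|$ as desired.

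The main obstacle is the pigeonhole reduction in the first step: although the bound $k(G) \le 27.2\, q^r$ is comfortably small compared to $|G|^{\delta/2}$ whenever $|G|$ is large in terms of $\delta$, one still has to uniformly control the finitely many bounded-rank, bounded-$q$ exceptions, where $k(G) > |G|^{\delta/2}$. These are handled separately: since the total number of such groups $G$ (up to isomorphism) is bounded in terms of $\delta$, one may enlarge $N$ by a finite amount to absorb all of them, or simply verify Theorem~\ref{product} for each such $G$ by applying \cite[Theorem~8.5]{LT} directly, using that every character $\chi_i$ with $|\chi_i|\ge |G|^\delta$ in a group of bounded order is forced to have some irreducible constituent of absolute degree bounded below.
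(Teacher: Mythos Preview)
Your overall strategy---pick a largest-degree irreducible constituent $\alpha_i$ of each $\chi_i$ and feed these into \cite[Theorem~8.5]{LT}---is exactly what the paper does. The problem is in how you handle the bounded-rank case.

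The claim that ``$\log k(G)/\log|G|$ can be made smaller than $\delta/2$ \ldots\ in the bounded-rank case, [once] $q$ is sufficiently large'' is false. For a fixed rank $r$ the ratio $\log k(G)/\log|G|$ tends to a \emph{positive constant} as $q\to\infty$ (roughly $1/r$ for classical types; e.g.\ for $\PSL_2(q)$ it tends to $1/3$). So for $\delta$ small---say $\delta<2/3$---every $\PSL_2(q)$ violates $k(G)\le|G|^{\delta/2}$, no matter how large $q$ is. Hence your ``finitely many bounded-rank, bounded-$q$ exceptions'' is an infinite family, and the finite-checks paragraph does not close the gap.

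The paper handles this by splitting at the threshold $k(G)\gtrless |G|^{\delta/3}$. When $k(G)\ge|G|^{\delta/3}$ the Fulman--Guralnick bound forces $r$ to be bounded in terms of $\delta$, and then one abandons the pigeonhole inequality entirely and instead invokes the Landazuri--Seitz lower bound \cite{LaSe}: any nontrivial $\chi_i^*\in\Irr(G)$ already satisfies $\chi_i^*(1)>q^r/3\ge|G|^{\delta_0}$ for some $\delta_0>0$ depending only on the rank bound. This gives the uniform degree lower bound you need over the whole infinite family of bounded-rank groups, after which \cite[Theorem~8.5]{LT} applies as before.
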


\begin{proof}
For any character $\chi$ of $G$, let $\chi^*$ denote (some) irreducible constituent of largest degree of $\chi$.

First we consider the case $k(G) \geq |G|^{\delta/3}$. If $G$ is of rank $r$ over $\F_q$, then
by \cite{FG} we have $k(G) \leq (27.2)q^r$, whereas $|G| \geq q^{r^2}$. It follows that $r \leq r_0$ is bounded (in terms of $\e$).
On the other hand, $|\chi_i| > 1$ implies that $\chi^*_i \neq 1_G$, and so $\chi^*_i(1) > q^r/3$ by \cite{LaSe}. As $r \leq r_0$,
$\chi^*_i(1) \geq |G|^{\delta_0}$ for some $\delta_0$ depending on $\e$. Applying \cite[Theorem 8.5]{LT}
and taking $N \geq \epsilon/\delta_0$ (with $\epsilon$ the constant in \cite[Theorem 8.5]{LT}),
we see that $\chi^*_1 \ldots \chi^*_N$ contains $\Irr(G)$.

Now suppose that $k(G) \leq |G|^{\delta/3}$. Then $\chi^*_i(1) \geq (|\chi_i|/k(G))^{1/2} \geq |G|^{\delta/3}$.
Applying \cite[Theorem 8.5]{LT}
and taking $N \geq 3 \epsilon/\e$, we again see that $\chi^*_1 \ldots \chi^*_N$ contains $\Irr(G)$.
\end{proof}

\begin{proof}[Proof of Theorem \ref{char-growth}]
Fix $\epsilon > 0$. It follows from Theorem \ref{prod}, applied for $\chi_1=\chi_2 = \chi$ with $\epsilon$ replaced by $\epsilon/2$,
that there exists some $\e > 0$ depending only on $\epsilon$ such that if $|\chi| \leq |G|^{\e}$, then
$|\chi^2| \geq |\chi|^{2-\epsilon}$. By Theorem \ref{product},  there is some integer $N > 0$ depending only on $\epsilon$, such that
if $|\chi| \geq |G|^{\delta}$ then $\chi^N$ contains $\Irr(G)$. The result follows.
\end{proof}

\begin{proof}[Proof of Theorem \ref{cover}]
For any character $\chi$ of $G$, again let $\chi^*$ denote (some) irreducible constituent of largest degree of $\chi$.
It suffices to prove the statement in the case $|\chi_i| > 1$, and thus $\chi^*_i \neq 1_G$. By the results of \cite{FG} and
\cite{LaSe}, the degree of any non-trivial irreducible character of $G$ is
at least $k(G)^{1/6}$; in particular,
$$\chi^*_i(1) \geq k(G)^{1/10} \geq m_i^{1/6},$$
if $m_i$ denotes the number of distinct irreducible constituents of $\chi_i$. It follows that
$$|\chi_i| \leq m_i\chi^*_i(1)^2 \leq \chi^*_i(1)^{8},$$
and so $\chi^*_i(1) \geq |\chi_i|^{1/8}$. Now applying \cite[Theorem 8.5]{LT} and taking $c=8\epsilon$ (with $\epsilon$ the constant in \cite[Theorem 8.5]{LT}), we have that $\prod^m_{i=1}\chi^*_i(1) \geq |G|^\epsilon$, and so $\chi^*_1 \ldots \chi^*_N$ contains $\Irr(G)$.
\end{proof}

\section{Proof of Theorem~\ref{N=6}}

First we prove the theorem when $G = \PSL_2(q)$.  Let $\chi_1$ and $\chi_2$ denote any irreducible characters of $G$.
Then $|\chi_i(g)| \le q^{1/2}$ for all $g\in G$, $|\chi_i(g)| \le 2$ for all non-trivial semisimple $g\in G$, and the total number of non-semisimple elements of $G$ is less than $q$.
Therefore
$$\sum_{g\neq 1} |\chi_1^6(g)\chi_2(g)| \le \chi_2(1) q^4 + 128q^3.$$
If $\chi_1$ is non-trivial, it has degree at least $\frac{q-1}2$, so for $q$ sufficiently large,
$$\sum_{g\in G} \chi_1^6(g)\chi_2(g) \neq 0,$$
and applying this to any non-trivial constituent $\chi_1$ or $\chi_2$, we get the desired result.

\smallskip
As before, we use the notation $\PSL^\epsilon$ to denote $\PSL$ when $\epsilon =+$ and $\PSU$ when $\epsilon=-$.
Next we consider the case $G=\PSL^\epsilon_3(q)$.  Again, the generic character table has been computed explicitly \cite{SF}.  There are $O(q)$ characters of degree $\le q^2+q+1$,
and all other characters have degree at least $q^3/4$ (when $q$ is not too small).  Therefore, the sum of the squares of the degrees of irreducible characters of $G$ of degree $< q^3/4$
is smaller than $|G|^{11/12}$ for large values of $q$.  We therefore assume $\chi$ has an irreducible constituent $\chi_1$ of degree $\ge q^3/4$.
We have $|\chi_i(g)| = O(q)$ for all irreducible characters and all non-trivial elements, so
$$\sum_{g\neq 1} |\chi_1^6(g) \bar\theta(g)| = O(q^6 |G|\theta(1)) = O(q^{14}\theta(1)) = o(\chi_1(1)^6\theta(1)).$$
Therefore, if $\chi_1(q) \ge q^3/4$ and $q$ is sufficiently large, any $\theta \in \Irr(G)$ is a constituent of $\chi_1^6$.

\smallskip
We may therefore assume $G = \PSL^\epsilon_n(q)$ with $n\ge 4$.  We claim that for $q$ sufficiently large, $|\chi| \ge |G|^{11/12}$ implies $\chi$ has an irreducible constituent $\chi_1$ with
\begin{equation}
\label{44}
\chi_1(1) \ge q^{\frac{44n^2}{131}}.
\end{equation}

Indeed, the number of characters of $G$ which do not satisfy this inequality is less than $k(G) = O(q^{n-1})$ and
$|G|=O(q^{n^2-1})$, so it suffices to note that
$$\frac{88n^2}{131} + (n-1) < \frac{11(n^2-1)}{12}$$
for $n\ge 4$.

\smallskip
We claim that if $q$ sufficiently large in terms of $n$, \eqref{44} implies that  the Steinberg character $\St$  is a constituent of $\chi_1^3$.
If $\epsilon=+$ or if $\epsilon=-$ but $2|n$, then every irreducible character of $G$ appears in $\St^2$ \cite[Theorem~1.2]{HSTZ}, and so it follows that $|\chi_1^6| = |G|$.
Consider the case $\epsilon=-$ and $2 \nmid n$. Then by \cite[Theorem~1.2]{HSTZ},
$\St^2$ contains all but one irreducible character $\alpha$ of $\PSU_n(q)$,
which has smallest degree $(q^n-q)/(q+1)$ among all nontrivial irreducible characters of $G$. Let $\theta \in \Irr(G)$. By Proposition~\ref{lem:bound1}(i) and \eqref{44},
$$|\theta\bar\chi_1| \geq |\chi_1(1)| > |\alpha|,$$
and so $\theta\bar\chi_1 \neq m\alpha$ for any $m \in \N$ by \eqref{44}. Hence $\theta\bar\chi_1$ contains some
irreducible character $\beta \neq \alpha$, whence $\langle \St^2,\beta \rangle > 0$ and so
$$\langle \chi_1^7,\theta \rangle = \langle \chi_1^6,\theta\bar\chi_1\rangle \geq \langle \St^2,\beta \rangle > 0,$$
and thus $\chi_1^7$ contains $\Irr(G)$, as desired.

\smallskip
It suffices to show that for large enough $q$,
\begin{equation}
\label{cube contains Steinberg}
\sum_{g\neq 1} \frac{|\St(g)\chi_1(g)^3|}{\St(1) \chi_1(1)^3} < 1.
\end{equation}

As $\St$ vanishes at all non-semisimple elements, we need only consider semisimple elements $g$ in \eqref{cube contains Steinberg}.
Let $X_s$ denote the set of conjugacy classes of semisimple elements in $G$ of support $s$, i.e., for which the largest dimension of any $\bar\F_q$-eigenspace is $n-s$.
We can rewrite \eqref{cube contains Steinberg} as
\begin{equation}
\label{sum over supports}
\sum_{s=1}^{n-1} \sum_{C\in X_s} |C|\frac{|\St(C)|}{\St(1)} \biggl(\frac{|\chi_1(C)|}{\chi_1(1)}\biggr)^3 < 1.
\end{equation}

We consider the factors of the summand in the left hand side of \eqref{sum over supports}, one by one.
If $C\in X_s$ has an element $g$ represented by a matrix $M$ with eigenvalue multiplicities $m_1,\ldots,m_k$, then the centralizer of $M$ in $\GL^\epsilon_n(q)$ is the group of $\F_q$ points (possibly twisted) of a connected algebraic group of dimension $\sum_i m_i^2$, so its order is $(1+o(1))q^{\sum_i m_i^2}$.  Therefore, $C$, which is contained in the $\PSL^\epsilon_n(q)$-conjugacy class of $g$,
has cardinality
$$O(\gcd(n,q-\epsilon)q^{n^2 - \sum_i m_i^2})=O(q^{n^2 - \sum_i m_i^2}).$$

The absolute value of $\St(C) = \St(g)$ is the order of a $p$-Sylow subgroup of the centralizer of $M$, which is $q^{\sum_i \binom{m_i}2}$,
and $\St(1) = q^{\binom{n}{2}}$.
Therefore,
$$\frac{|\St(C)|}{\St(1)} = q^{-\frac{n^2 - \sum_i m_i^2}2},$$
and
$$|C| \frac{|\St(C)|}{\St(1)} = O\Bigl(q^{\frac{n^2 - \sum_i m_i^2}2}\Bigr).$$
As some $m_i$ equals $n-s$, we have
$$\sum_i m_i^2 \ge (n-s)^2 + s\cdot 1^2,$$
so
$$|C| \frac{|\St(C)|}{\St(1)} =  O\Bigl(q^{\frac{2sn-s^2-s}2}\Bigr).$$
By \cite[Theorem 1.13]{TT} and \cite[Theorem 3.1]{LiST}, for $q$ large enough we have
$$\frac{|\chi_1(g)|}{\chi_1(1)} = O(\chi_1(1)^{-s/n}) = O(q^{-\frac{44sn}{131}}),$$
where the implicit constant depends only on $n$.  Thus, the summand as a whole is $O(q^{-\frac{sn}{131}-\frac{s^2+s}2})$.

\smallskip
Finally, we claim that $|X_s| = O(q^s)$.  Since each conjugacy class of $\PGL^\epsilon_n(q)$ decomposes into at most $\gcd(n,q-\epsilon)$ conjugacy classes of $G$, it suffices to prove the same thing for $\PGL^\epsilon_n(q)$ conjugacy classes containing semisimple elements of $\PSL^\epsilon_n(q)$ or, indeed, $\GL^\epsilon_n(\F_q)$ conjugacy classes containing semisimple elements of $\SL^\epsilon_n(\F_q)$. Since any such element has connected centralizer
in $\GL_n(\overline{\F_q})$, any such class is uniquely determined by the spectrum of its representatives, equivalently, by their
common characteristic polynomial.

Let $Y_{s,i}$ denote the set of conjugacy classes in $\GL^\epsilon_n(\F_q)$ of semisimple elements $M$ in $\SL^\epsilon_n(\F_q)$ which have exactly $i$ eigenvalues of multiplicity $n-s$ (and all other eigenvalues of lower multiplicity), where
$1 \leq i  \leq t:= \lfloor n/(n-s) \rfloor$.
Let $P(x)$ be the monic polynomial of degree $i$ whose roots are the $i$ eigenvalues of $M$ of multiplicity $n-s$, and $Q(x)$ be the monic polynomial of degree $n-i(n-s)$, such that $P(x)^iQ(x)$ is the characteristic polynomial of $M$. Then
$$\det P(0)^iQ(0) = (-1)^n\det(M) = (-1)^n.$$
So the constant term of $P(x)$ is determined up to at most $i \leq n$ possibilities by the constant term of $Q(x)$, and so there are at most $nq^{i-1}$
possibilities for $P(x)$ for any fixed (constant term of) $Q(x)$.
Since there are at most $q^{n-i(n-s)}$ possibilities for $Q(x)$, we see that the total number of possibilities for $P(x)^iQ(x)$,
which, as explained above, gives an upper bound for $|Y_{s,i}|$, is at most
$$n q^{i-1} q^{n-i(n-s)} = nq^{s+(1-i)(n-s-1)}=O(q^s).$$
Since $q$ is large compared to $n$, it follows that $|X_s| = \sum^t_{i=1}|Y_{s,i}| = O(q^s)$, as claimed.

We conclude that the left hand side of \eqref{sum over supports} is $O(q^{-sn/131})$.  Taking $q$ large enough, the statement now follows.

\section{Semisimple Compact Lie groups}

In this section, we briefly consider the situation when $G$ is a compact (connected) semisimple Lie group instead of a finite simple group of Lie type.
For $\chi$ a character of $G$,  the definition of $|\chi|$ works as before.
We have the following analogue of Proposition~\ref{lem:bound1}, which no longer requires irreducibility.

\begin{prop}
\label{compact-growth}
Let $G$ be a semisimple compact Lie group, $n\ge 2$, and $\chi_1,\chi_2,\ldots,\chi_n$ non-trivial characters.  Then $|\chi_1\chi_2\cdots\chi_n| > \max(|\chi_1|,|\chi_2|,\ldots,|\chi_n|)$.
\end{prop}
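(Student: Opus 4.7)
The plan is to argue via highest-weight theory, which accommodates reducible characters at no extra cost. I assume, as is standard, that $G$ is connected. Fix a maximal torus $T \subseteq G$ and positive roots $R^+$ in the root system $R(G,T)$, with $\rho$ the half-sum of positive roots; let $P_+$ denote the monoid of dominant integral weights. Each irreducible $G$-module is $V_\mu$ for a unique $\mu \in P_+$. For a character $\chi$ of $G$, let $H(\chi) \subseteq P_+$ denote the set of highest weights of its distinct irreducible constituents, so that $|\chi| = \sum_{\mu \in H(\chi)} (\dim V_\mu)^2$. The two classical inputs I will use are: (i) the unique top constituent of $V_\mu \otimes V_\nu$ is $V_{\mu+\nu}$, occurring there with multiplicity one; and (ii) Weyl's dimension formula, combined with the dominance of $\nu$, yields
$$\frac{\dim V_{\mu+\nu}}{\dim V_\mu} \;=\; \prod_{\alpha \in R^+}\Bigl(1 + \frac{\langle \nu, \alpha\rangle}{\langle \mu + \rho, \alpha\rangle}\Bigr),$$
which is strictly greater than $1$ whenever $\nu \ne 0$: each factor is $\ge 1$, since $\langle \nu,\alpha\rangle \ge 0$ for dominant $\nu$ and positive $\alpha$, and semisimplicity of $G$ forces $\langle \nu, \alpha\rangle > 0$ for at least one $\alpha \in R^+$.

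The heart of the argument is the following key lemma: \emph{if $\chi$ is any nonzero character of $G$ and $\eta$ is a character containing at least one nontrivial irreducible constituent, then $|\chi\eta| > |\chi|$.} To prove it, I pick any $\nu^* \in H(\eta)$ with $\nu^* \ne 0$, available by hypothesis. By (i), for every $\mu \in H(\chi)$ the weight $\mu + \nu^*$ lies in $H(\chi\eta)$, and the translation $\mu \mapsto \mu + \nu^*$ is injective, so $H(\chi\eta)$ contains $|H(\chi)|$ pairwise distinct translates. Summing $(\dim V_{\mu+\nu^*})^2$ over them and applying (ii) term by term gives
$$|\chi\eta| \;\ge\; \sum_{\mu \in H(\chi)} (\dim V_{\mu+\nu^*})^2 \;>\; \sum_{\mu \in H(\chi)} (\dim V_\mu)^2 \;=\; |\chi|.$$

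Proposition~\ref{compact-growth} then follows by iteration: for any fixed index $i$, I strip off the remaining factors one at a time. Applying the key lemma repeatedly, each time with $\eta$ equal to the next factor being removed (nontrivial by hypothesis) and $\chi$ equal to the current product (a nonzero character), produces a strict chain
$$|\chi_1\cdots\chi_n| \;>\; |\chi_1\cdots\widehat{\chi}_{j_1}\cdots\chi_n| \;>\; \cdots \;>\; |\chi_i|.$$
Since $i$ was arbitrary, $|\chi_1\cdots\chi_n| > \max_j |\chi_j|$, as desired.

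I do not anticipate any substantive obstacle beyond the key lemma, which itself is fairly direct. The conceptual point, and the reason reducibility of the $\chi_i$ causes no trouble here in contrast to the finite-group setting of Proposition~\ref{lem:bound1}, is that highest-weight theory furnishes, in one stroke, both an injection $H(\chi) \hookrightarrow H(\chi\eta)$ via translation by a single nonzero $\nu^* \in H(\eta)$, and a uniform termwise strict dimension gain via (ii); no analog of the central-type/Howlett--Isaacs rigidity arguments is required. The one point where semisimplicity of $G$ enters essentially is to rule out the possibility that a nonzero dominant weight is orthogonal to every positive root.
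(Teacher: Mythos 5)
Your proof is correct and follows essentially the same route as the paper's: both pick a nonzero dominant highest weight from one factor, note that translating the highest weights of the other factor's constituents by it injects into the constituents of the product, and use the Weyl dimension formula to get a strict termwise dimension increase. The only difference is organizational — the paper first reduces to $n=2$ with one factor irreducible, whereas you package the same content as a "key lemma" and iterate.
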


\begin{proof}
It suffices to treat the case $n=2$ and to prove in this case that if $\chi_1$ is non-trivial, then $|\chi_1 \chi_2| > |\chi_2|$.   It is enough to treat the case that $\chi_1$ is irreducible.
Let $\lambda_1$ denote the highest weight of $\chi_1$.  Let $\{\varphi_1,\ldots,\varphi_k\}$ denote the irreducible constituents of $\chi_2$, and let $\mu_i$ denote the highest weight of $\varphi_i$.

Then $\lambda_1 + \mu_i$ is the highest weight in $\chi_1 \varphi_i$, so there is an irreducible constituent $\psi_i$ of $\chi_1 \chi_2$ with highest weight $\lambda_1 + \mu_i$.
By the Weyl dimension formula,
\begin{equation}
\label{Weyl squared}
|\chi_1 \chi_2| \ge \sum_{i=1}^k |\psi_i|^2 = \sum_{i=1}^k \prod_{\alpha\succ 0} \frac{(\delta+\lambda_1+\mu_i,\alpha)^2}{(\delta,\alpha)^2}
= \sum_{i=1}^k \prod_{\alpha\succ 0} \frac{((\delta+\mu_i,\alpha)+(\lambda_1,\alpha))^2}{(\delta,\alpha)^2},
\end{equation}
where $\delta$ denotes half the sum of the positive roots.
As $\lambda_1$ is a non-zero dominant weight, we have $(\lambda_1,\alpha)$ non-negative for all positive roots $\alpha$ and strictly positive for at least one of them.
Therefore, the right hand side of \eqref{Weyl squared} is strictly greater than
$$\sum_{i=1}^k \prod_{\alpha\succ 0} \frac{(\delta+\mu_i,\alpha)^2}{(\delta,\alpha)^2} = \sum_{i=1}^k |\varphi_i|^2 = |\chi_2|.$$

\end{proof}

When $G$ is of positive dimension, it no longer makes sense to compare $|\chi|$ to $|G|$, so we do not have
an analogue of Theorem~\ref{prod} or any of the subsequent results proved above for groups of Lie type.
We can still ask about power growth of $|\chi|$.
The case of $\SU(2)$ illustrates the situation:
we get uniform power growth when $\chi$ is irreducible, but there is no such growth for general characters.

\begin{prop}
Let $G=\SU(2)$.
\begin{enumerate}
\item[\rm(i)]
If $\chi$ ranges over irreducible characters of $G$, then
$$\lim_\chi \frac{\log |\chi^2|}{\log |\chi|} = \frac 32.$$
\item[\rm(ii)]
If $\chi$ ranges over all characters of $G$, then
$$\liminf_\chi \frac{\log |\chi^2|}{\log |\chi|} = 1.$$
\end{enumerate}
\end{prop}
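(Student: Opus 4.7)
My plan is to use the explicit description of $\Irr(\SU(2))$ and the Clebsch--Gordan rule. Let $\chi_n$ denote the irreducible character of $\SU(2)$ of dimension $n+1$, i.e.\ the character of $\mathrm{Sym}^n(\C^2)$. The single ingredient throughout will be the multiplicity-free decomposition
$$\chi_m\chi_n \;=\; \chi_{m+n}+\chi_{m+n-2}+\cdots+\chi_{|m-n|}.$$

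For part (i), I apply this with $m=n$ to obtain $\chi_n^2 = \chi_0+\chi_2+\cdots+\chi_{2n}$, which is multiplicity-free. Then $|\chi_n|=(n+1)^2$ while
$$|\chi_n^2| \;=\; \sum_{j=0}^n(2j+1)^2 \;=\; \frac{(n+1)(2n+1)(2n+3)}{3} \;\sim\; \frac{4n^3}{3}.$$
Taking logarithms, the ratio $\log|\chi_n^2|/\log|\chi_n|$ visibly tends to $3/2$ as $n\to\infty$.

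For part (ii), the lower bound $\liminf\ge 1$ is immediate from Proposition~\ref{compact-growth}: for every nontrivial character $\chi$ one has $|\chi^2|>|\chi|$, hence $\log|\chi^2|/\log|\chi|>1$. For the matching upper bound I will test against the sequence
$$\psi_N \;:=\; \chi_0+\chi_1+\cdots+\chi_N,$$
for which $|\psi_N|=\sum_{n=0}^N(n+1)^2 \sim N^3/3$. Using Clebsch--Gordan, I will argue that the distinct irreducible constituents of $\psi_N^2$ are exactly $\chi_0,\chi_1,\ldots,\chi_{2N}$. Indeed, every $\chi_k$ with $0\le k\le 2N$ does appear, since $\chi_{2j}$ is a constituent of $\chi_j\chi_j$ and $\chi_{2j+1}$ of $\chi_j\chi_{j+1}$, both with $j\le N$; conversely no $\chi_k$ with $k>2N$ can appear, since each summand $\chi_m\chi_n$ with $m,n\le N$ produces only $\chi_k$ with $k\le m+n\le 2N$. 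Therefore
$$|\psi_N^2| \;=\; \sum_{k=0}^{2N}(k+1)^2 \;\sim\; \frac{8N^3}{3},$$
and $\log|\psi_N^2|/\log|\psi_N|\to 1$, as required.

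I do not anticipate a real obstacle: once the test sequence in part (ii) is chosen, the argument reduces to two routine asymptotic computations. The only conceptual point worth flagging is the design of $\psi_N$: bundling together all small irreducibles forces both the number of distinct constituents of $\psi_N^2$ and the maximum constituent degree to grow only linearly in $N$, so that $|\psi_N|$ and $|\psi_N^2|$ are of the same polynomial order $N^3$, which is precisely what drives the logarithmic ratio down to $1$.
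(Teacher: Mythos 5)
Your proof is correct. Part (i) is identical to the paper's argument: Clebsch--Gordan gives the multiplicity-free decomposition $\chi_n^2=\chi_0+\chi_2+\cdots+\chi_{2n}$, and the ratio of logarithms of $|\chi_n^2|\sim 4n^3/3$ and $|\chi_n|=(n+1)^2$ tends to $3/2$. In part (ii) you choose a different test sequence: the paper takes $\chi=\chi_n^2$, so that $\chi^2=\chi_n^4$ has constituents $\chi_0,\chi_2,\ldots,\chi_{4n}$ and $|\chi^2|/|\chi|=\binom{4n+3}{3}/\binom{2n+3}{3}\to 8$, while you take $\psi_N=\chi_0+\chi_1+\cdots+\chi_N$, whose square has constituents exactly $\chi_0,\ldots,\chi_{2N}$, giving $|\psi_N^2|/|\psi_N|\to 8$ as well. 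The two sequences play the same structural role (a character supported on all small irreducibles of one or both parities, so that squaring only doubles the range of constituents), and both reduce to the same routine asymptotics, so this is a cosmetic rather than conceptual divergence; if anything your write-up is slightly more careful, since you explicitly supply the lower bound $\liminf\ge 1$ via Proposition~\ref{compact-growth}, which the paper leaves implicit.
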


\begin{proof}
If $\chi_n$ denotes the unique  irreducible character of $G$ of degree $n+1$, then by the Clebsch-Gordan formula (\cite[\S22,~Ex.~7]{Hump}),
$$\chi_n^2 = \sum_{i=0}^n \chi_{2i}.$$
Thus,
$$|\chi_n^2| = \sum_{i=0}^n (2i+1)^2 = \binom{2n+3}{3},$$
while $|\chi_n| = (n+1)^2$.  This gives (i).

Again by Clebsch-Gordan, $\chi_n^4$ is a linear combination of $\chi_0,\chi_2,\chi_4,\ldots,\chi_{4n}$ with all coefficients positive, so
$$|\chi_n^4| = \binom{4n+3}{3}.$$
Therefore, as $\chi$ ranges over $\{\chi_n^2,\mid n\in \N\}$, the limit of $\frac{|\chi^2|}{|\chi|}$ is $8$, and the limit of $\frac{\log|\chi^2|}{\log|\chi|}$ is $1$.
\end{proof}

Next, we prove Theorem~\ref{compact} establishing uniform power growth for irreducible characters of any fixed compact semisimple Lie group.


\begin{proof}[Proof of Theorem \ref{compact}]
Let $\varpi_1,\ldots,\varpi_r$ denote the fundamental weights of $G$.
Let $\chi_\lambda$ be the irreducible character with highest weight $\lambda = a_1 \varpi_1+\cdots+a_r\varpi_r$.
We define
$$\langle \lambda,\alpha\rangle := \frac{2(\lambda,\alpha)}{(\alpha,\alpha)}.$$
By \cite[\S22,~Ex.~1]{Hump}, $\lambda-k\alpha_i$ is a weight of $\chi_\lambda$ for $0\le k\le \langle \lambda,\alpha_i\rangle = a_i$.

We claim that $\delta+2\lambda - k\alpha_i$ is dominant for $0\le k\le a_i$.  It suffices to check the non-negativity of $\langle \delta+2\lambda-k\alpha_i,\alpha_j\rangle$
for $1\le j\le r$.  For $j\neq i$, we have
$$\langle \delta+2\lambda-k\alpha_i,\alpha_j\rangle = 1+2a_j - \langle \alpha_i,\alpha_j\rangle \ge 1,$$
and for $j=i$, we have
$$\langle \delta+2\lambda-k\alpha_i,\alpha_i\rangle = 1+2a_i - 2k \ge 1.$$
Therefore, by a theorem of Brauer \cite[\S24,~Ex.~9]{Hump},
for each $k$ in this range, $\chi_\lambda^2$ contains the irreducible character with highest weight $2\lambda-k\alpha_i$.

By the Weyl dimension formula \cite[\S24.3]{Hump},
$$\chi_\lambda(1) = \prod_{\alpha\succ 0}\frac{(\lambda+\delta,\alpha)}{(\delta,\alpha)},$$
where the product is taken over the set $\Phi^+$ of positive roots $\alpha$.  Now, for $1\le i\le r$, $(\varpi_i,\alpha)\ge 0$,
so regarded as a function in $\lambda$, $\chi_\lambda(1)$ is a product of affine linear functions in $\lambda$ which take positive values
in the cone of dominant weights.  Thus, if $a_i\ge 1$ and $0\le k\le a_i$, then
\begin{equation}
\label{big-factors}
\chi_{2\lambda-k\alpha_i}(1) \ge (1-k/a_i)^{|\Phi^+|} \chi_{2\lambda}(1)\ge (1-k/a_i)^{|\Phi^+|}\chi_\lambda(1).
\end{equation}

We fix $i$ such that $a_i = \max(a_1,\ldots,a_r)$.  Thus,
$$|\chi_\lambda| = \chi_\lambda(1)^2 = O(a_i^{2|\Phi^+|}) = O(a_i^{|\Phi|}).$$
On the other hand, by \eqref{big-factors},
$$|\chi_\lambda^2| \ge \sum_{k=0}^{a_i} |\chi_{2\lambda-k\alpha_i}| \ge |\chi_\lambda| \sum_{k=0}^{a_i} (1-k/a_i)^{|\Phi|}.$$
As $a_i\to \infty$, the sum $\sum_k (1-k/a_i)^{|\Phi|}$ can be bounded below by a positive constant multiple of $a_i$ and therefore by a positive constant
multiple of $|\chi_\lambda|^{1/|\Phi|}$.  If $\epsilon < 1/|\Phi|$, therefore, $|\chi_\lambda^2| \ge |\chi_\lambda|^{1+\epsilon}$ with finitely many exceptions $\lambda$.
The theorem is trivial for $\lambda=0$, and for each $\lambda\neq 0$, it holds when $\epsilon > 0$ is small enough by Proposition~\ref{compact-growth}.

\end{proof}

Note that, unlike Theorem~\ref{main irred}, Theorem~\ref{compact} does not guarantee that power growth is uniform in $G$.  This is unavoidable, as the following proposition shows.

\begin{prop}
There exists a sequence $\chi_2,\chi_3,\ldots$ of non-trivial irreducible characters of the Lie groups $\SU(2),\SU(3),\ldots$ respectively, such that
$$\liminf_n \frac{\log |\chi_n^2|}{\log |\chi_n|} = 1.$$
\end{prop}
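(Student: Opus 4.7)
The lower bound $\liminf_n \log|\chi_n^2|/\log|\chi_n| \ge 1$ follows from Proposition~\ref{compact-growth}, which gives $|\chi_n^2| > |\chi_n|$ for any non-trivial irreducible character $\chi_n$. So the task reduces to exhibiting, for each $n\ge 2$, a non-trivial irreducible character $\chi_n$ of $\SU(n)$ for which $\log|\chi_n^2|/\log|\chi_n|$ can be driven arbitrarily close to $1$ as $n\to\infty$.

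I would take $\chi_n$ to be the irreducible character with highest weight $N\delta$, where $\delta=\varpi_1+\cdots+\varpi_{n-1}$ (equivalently, half the sum of the positive roots) and $N=N(n)$ is a large integer to be chosen. Since $(N\delta+\delta,\alpha)/(\delta,\alpha)=N+1$ for every positive root $\alpha$, the Weyl dimension formula gives $\chi_n(1)=(N+1)^{n(n-1)/2}$, so $|\chi_n|=(N+1)^{n(n-1)}$. As a partition, $N\delta$ corresponds to $(N(n-1),N(n-2),\ldots,N,0)$.

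The core of the argument is an upper bound on $|\chi_n^2|=\sum_\mu \chi_\mu(1)^2$, where $\mu$ ranges over distinct highest weights of irreducible constituents of $\chi_n\otimes\chi_n$. I control this via two ingredients. First, each such $\mu$ is a dominant partition of size $2|N\delta|=Nn(n-1)$ with $\mu\preceq 2N\delta$ in partition dominance; these form lattice points in a polytope of dimension $n-1$ scaling linearly in $N$, so there are at most $C_n N^{n-1}$ of them, with $C_n$ depending only on $n$. Second, normalizing $\mu_n=0$, the dominance inequality gives $\mu_1\le 2N(n-1)$, hence $\mu_i-\mu_j\le 2N(n-1)$ for every positive root $e_i-e_j$; combined with the Weyl formula
$$\chi_\mu(1) = \prod_{\alpha\succ 0}\frac{(\mu+\delta,\alpha)}{(\delta,\alpha)},$$
this yields $\chi_\mu(1)\le D_n N^{n(n-1)/2}$. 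Together these give $|\chi_n^2|\le E_n N^{n^2-1}$, where $E_n=C_n D_n^2$ depends only on $n$, so
$$\frac{\log|\chi_n^2|}{\log|\chi_n|} \;\le\; \frac{n+1}{n} + \frac{\log E_n}{n(n-1)\log N}.$$
Choosing $N(n)$ large enough that $\log N(n)\ge n\log E_n$ kills the error term, and letting $n\to\infty$ drives the bound to $1$, proving $\liminf\le 1$.

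The delicate step is bounding the maximum constituent dimension. One might hope that the Cartan product $\chi_{2N\delta}$ of dimension $(2N+1)^{n(n-1)/2}$ is the largest, but that claim is false in general: already for $\GL_4$, the partition $(3,1)$ has dimension $45$ versus $35$ for $(4)$, and both occur in $[2]\cdot[2]$. The argument sidesteps this by using only the uniform estimate $\mu_i-\mu_j\le 2N(n-1)$, which applies to \emph{every} constituent $\mu$ and suffices to pin $\chi_\mu(1)$ into the polynomial-in-$N$ range the proof requires.
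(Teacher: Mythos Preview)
Your proof is correct and follows essentially the same approach as the paper's: both choose $\chi_n$ to have highest weight $N\delta$ (the paper writes $k_n\delta_n$), compute $\chi_n(1)=(N+1)^{n(n-1)/2}$ via Weyl, bound the number of distinct constituents of $\chi_n^2$ by $O(N^{n-1})$ via a lattice-points-in-a-scaled-polytope argument, bound each constituent's dimension by $O(N^{n(n-1)/2})$ via Weyl, and conclude $\log|\chi_n^2|/\log|\chi_n| < (n+1)/n$ for $N$ large. The only cosmetic differences are that you phrase the polytope bound in terms of partition dominance rather than the convex hull of the Weyl orbit of $2N\delta$, and you make explicit the easy lower bound $\liminf\ge 1$ from Proposition~\ref{compact-growth}, which the paper leaves implicit.
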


\begin{proof}
We choose for each $n\ge 2$ a positive integer $k_n$ and define $\chi_n$ to be the character of the irreducible representation of $\SU(n)$ with highest weight
$\lambda_n:=k_n\delta_n$,
where $\delta_n$ is half the sum of the set $\Phi_n^+$ of positive roots of $\SU(n)$.
By the Weyl dimension formula,
$$\chi_n(1) = \prod_{\alpha\in \Phi_n^+} \frac{(\lambda_n+\delta_n,\alpha)}{(\delta_n,\alpha)} = (k_n+1)^{|\Phi_n^+|} = (k_n+1)^{n(n-1)/2}.$$
On the other hand, for each fixed $n$, the weights of $\chi_n$ are contained in the convex hull of $\{w(\lambda_n)\mid w\in \SSS_n\}$.
This can be expressed as $k_n X_n$, where $X_n$ denotes the convex hull of $\{w(\delta_n)\mid w\in \SSS_n\}$.
Thus, the weights of $\chi_n^2$ are contained in $2k_n X_n$.
Since the highest weight of every irreducible constituent of $\chi_n^2$ is a lattice point in the fixed polytope $2X_n\subset \R^{n-1}$, scaled by $k_n$, the number of such highest weights is  $O(k_n^{n-1})$, where the implicit constant depends only on $n$.

On the other hand, by the Weyl dimension formula, the degree of each irreducible factor of $\chi_n^2$ is $O(k_n^{n(n-1)/2})$.  Therefore, assuming that each $k_n$ is sufficiently large, we can guarantee
$$\frac{\log |\chi_n^2|}{\log |\chi_n|} < \frac{n+1}n,$$
which implies the proposition.
\end{proof}

\end{document}